\newcommand{\xdashrightarrow}[1][0.97em]{\mathrel{
    \tikz[baseline]{\draw[dash pattern=on .25em off .1 em,->](0,.64ex)--(#1,.64ex)}}}
\newtheorem{theorem}{Theorem}
\newtheorem{lemma}[theorem]{Lemma}
\newtheorem{proposition}[theorem]{Proposition}
\newtheorem{corollary}[theorem]{Corollary}
\theoremstyle{definition}
\newtheorem{definition}{Definition}[section]
\newtheorem{example}{Example}[subsection]
\newtheorem{remark}{Remark}
\newcommand{\C}{\mathbb C}  
\newcommand{\N}{\mathbb N}  
\newcommand{\Z}{\mathbb Z}  
\newcommand{\A}{\mathbb A}
\newcommand{\Y}{\mathcal{Y}}
\title{}
\author{}
\date{}
\newcommand{\vect}{\mathbcal{V\hspace{-0.445em}e\hspace{-0.25em}c\hspace{-0.175em}t}}
\newcommand{\coalg}{\mathbcal{C\hspace{-0.295em}o\hspace{-0.3em}a\hspace{-0.215em}l\hspace{-0.375em}g}}
\newcommand{\set}{\mathbcal{S\hspace{-0.225em}e\hspace{-0.125em}t}}
\newcommand{\modk}{\mathbcal{M\!\!o\!\!d}_{\hspace{-0.095em}\Bbbk}}
\newcommand{\modB}{\mathbcal{M\!\!o\!\!d}_{\hspace{-0.175em}B}}
\newcommand{\cat}{\mathbcal{C}}
\newcommand{\alg}{\mathbcal{A\hspace{-0.175em}l\hspace{-0.375em}g}}
\newcommand{\comod}{\mathbcal{C\hspace{-0.295em}o\hspace{-0.2em}m\hspace{-0.285em}o\hspace{-0.275em}d}}
\newcommand{\rotvdash}{%
           \mathrel{\raisebox{.1em}{%
           \reflectbox{\rotatebox[origin=c]{90}{$\vdash$}}}}}
\newcommand{\End}{\mathbcal{E\hspace{-0.245em}n\hspace{-0.275em}d}}
\newcommand{\st}{\mathbcal{S\hspace{-0.2em}t}}
\begin{document}
\title{Introduction to Quantum Combinatorics}
\author{Tomasz Maszczyk}
\maketitle
\begin{abstract}
We construct a topos of quantum sets and embed into it the classical topos of sets. We show that the internal logic of the topos of sets, when interpreted in the topos of quantum sets, provides the Birkhoff-von Neumann quantum propositional calculus of idempotents in a canonical internal commutative algebra of the topos of quantum sets. We extend this construction by allowing the quantum counterpart of Boolean algebras of classical truth values which we introduce and study in detail. We realize expected values of observables in quantum states in our topos of quantum sets as a tautological morphism from the canonical internal commutative algebra to a canonical internal object of affine functions on quantum states. We show also that in our topos of quantum sets one can speak about quantum quivers in the sense of Day-Street and Chikhladze. Finally, we provide a categorical derivation of the Leavitt path algebra of such a quantum quiver and relate it to the category of stable representations of the quiver. It is based on a categorification of the Cuntz-Pimsner algebra in the context of functor adjunctions replacing the customary use of Hilbert modules.
\end{abstract}
\addcontentsline{toc}{section}{Acknowledgement}
\tableofcontents
\newpage

\section{Introduction}

The history of the development of C*-algebras associated with combinatorial data began in the context of a shift of finite type, which was associated by Cuntz and Krieger \cite{C-K-80}with a transition matrix. It was generalized by various authors, including Bates, Fowler, Kumjian, Laca, Pask, and Raeburn \cite{F-L-R-00}, \cite{K-P-R-R-97}, to the context of more general subshifts associated with directed graphs. In another direction, Exel and Laca \cite{E-L-99} partially generalized Cuntz–Krieger algebras by allowing for an infinite matrix with 0 and 1 entries. These two approaches were unified by Tomforde \cite{T-03} in the context of hypergraphs.
Next, Carlsen \cite{C-08}, using the so-called extended Matsumoto’s construction \cite{M-97}, addressed the case of a general one-sided subshift over a finite alphabet, while Bates and Pask \cite{B-P-07}  associated  so-called labeled graph algebras with any shift space over a finite alphabet. Through the work of Carlsen, Ortega, and Pardo \cite{C-O-P-17}, it was shown that the case of labeled graphs can be realized as a Boolean algebra of a family of subsets of vertices, with partial actions given by the arrows. All these cases represent the 0-dimensional scenario of topological graphs, as explored by Katsura \cite{K-03}\cite{K-04}\cite{K-04Trans}, where the topology of actions can be reduced to their combinatorial structure.
The approach of Kumjian, Pask, Raeburn, and Renault \cite{K-P-R-R-97} regarding the groupoid presentation of graph C*-algebras has been extended by Exel \cite{E-08} in the direction of presenting quite general combinatorial C*-algebras via groupoids or inverse semigroups. Such presentations are crucial for studying properties of these C*-algebras, such as ideal structure, simplicity, and purely infinite properties, as well as for the computation of their K-theory.

Inspired by this development and motivated by Quantum Information Theory, efforts to extend these concepts to the context of quantum graphs appeared recently. Similarly, the initial datum is the adjacency matrix, quantized in an appropriate sense. Motivated by the work of Musto-Reutter-Verdon \cite{M-R-V-18}, it associates a quantum version of the Cuntz-Krieger C*-algebra with these structures. However, classically, this only covers the case of graphs with a finite set of vertices and without multiple edges. Moreover, it is not clear what a quantum counterpart of combinatorics might be in this context, nor how it relates to Boolean algebras, symmetries, and dynamical systems defined by (partial) semigroup actions. 
 
In addition, all such problems are convoluted because of a quite rich context, especially the presence of a dagger structure related to the antilinear involutive antihomomorphism, C*-algebras, Hilbert modules and hermitian scalar products together with a probabilistic interpretation of a paring between states and projections.  Therefore, to clarify the situation,  it would be helpful to take step back and start accumulating structures according to the usual classical mathematical hierarchy which starts from the topos of sets, the classical logic as an internal logic of this topos etc.  As it will be shown here, surprisingly many constructions in the quantum paradigm arises on the fundamental categorical level.  The present paper begins with the Yoneda full embedding of the category of counital coassociative coalgebras over a field which, albeit being complete and cocomplete \cite{A-11Cat, A-11Lim}, is not a topos, into its presheaf topos which we call the topos of quantum sets. Better still, we construct a partial-monoidal structure on the Yoneda functor, relating the tensor product of coalgebras with the argument-wise cartesian product of presheaves. This connects a category with a structure of a topos, in which we can speak the classical language,  with a category in which we can speak the quantum language  based on coalgebraic calculus. While the linearization functor fully embeds the category of sets into the category of coalgebras, which allows us to emulate classical combinatorics within the category of coalgebras, the presence of non-cocommutative coalgebras is what allows us to model quantum combinatorics. Although linearization functor is strong monoidal as transforming the cartesian product of sets into the tensor product of coalgebras, and then composed with the Yoneda functor goes to the argument-wise cartesian product of presheaves on coalgebras, when extended to the tensor product of coalgebras, it encounters a non-trivial obstruction of an abstract Eckmann-Hilton-type , to simultaneous realizability of a pair of coalgebra maps from a coalgebra to two others coalgebras as a single coalgebra map to their tensor product. Our first result says that this condition defines a subfunctor of the argument-wise cartesian product  of representable presheaves satisfying axioms of a partial-associative product. It is then natural to speak about a (partial-associative) quantum Cartesian product of  quantum sets. Note that in opposite to the classical Cartesian product of sets, its quantum counterpart is not a categorical product, but only a partial-monoidal structure on a (sub)category of quantum sets. 
This allows us to construct quantum logic which, in  particular, contains the Birkhoff–von Neumann's quantum propositional calculus interpreted through the image of classical logic in quantum logic internal to the topos of quantum sets. In this internal language,  the problems intractable by other approaches to quantum combinatorics can be sensibly addressed. In particular, quantum Boolean algebras, quantum semigroups and their partial actions, quantum graphs, and their Leavitt path algebras can be approached  in this alternative way. The characteristic feature of all these algebraic constructions is that they, as presheaves on the category of coalgebras, take values in partial algebraic structures. When, using the linearization functor, we fully embed the category of sets into the category of quantum sets, all these constructions restrict to classical ones. 
The language of coalgebras clarifies several technical problems encountered in the dual language of topological algebras when trying to relate noncommutative theory with classical combinatorics. In the situation of (quantum, row finite) graphs, usually people apply the Cuntz-Pimsner construction for a graph-induced self-correspondence between vertices of the graph to associate with it a graph C*-algebra.  In the parlance of noncommutative geometry, it can be inerpreted as associating to a combinatorial datum a (locally compact Hausdorff) quantum  space and relate combinatorics to the thus induced noncommutative topology. In our approach, we categorify the Cuntz-Pimsner construction \cite{P-97} \cite{K-03}\cite{K-04}\cite{K-04Trans} for a self-correspondence,  to an exact (dualizable) endofunctor on the category of comodules over the coalgebra object of vertices of a quantum graph, to associate with a  graph its Leavitt path monad. We prove that the Eilenberg-Moore category of this monad is equivalent to a category of quiver representations equipped with some retraction. When in this result the latter category is an analog of the Yetter-Drinfeld module category for a (finite dimensional) Hopf algebra, the Leavit path monad is an analog of the Drinfeld double algebra whose modules form a category equivalent to  the Yetter-Drinfeld module category. Since every graph C*-algebra is a C*-completion of a Leavitt path algebra, it gives an insight into the deep categorical roots of the  bicategorical interpretation of Meyer-Sehnem \cite{Mey-Seh-19} of the Cuntz-Pimsner construction. In opposite to their interpretation which can be regarded as viewing it from above and in the rich C*-algebraic and Hilbert module context, we propose a complementary derivation of it from the bottom up explaining it as coming from canonical adjunctions associated with a (quantum) quiver already on the (quantum) combinatorial level. We believe that understanding these categorical point of view could be helpful in future more and more categorical questions posed on the way of developing the theory also in the context of C*-algebras.  The point is that the construction of a graph C*-algebra of a quiver, in the parlance of Noncommutative Geometry, associates with a quiver a locally compact quantum space represented by a C*-algebra, while we associate with it only a quantum-combinatorial categorical object. Since both the quiver and the resulting object are both combinatorial in nature, except one can be entirely classical and the other is inherently quantum, difficulties created by a C*-algebraic structure cannot even arise. Therefore (quantum) combinatorial laboratory offers an opportunity of easy testing categorical questions before they are to be generalized to the algebraic or C*-algebraic context.

\section{Quantum Sets}
\subsection{The topos of quantum sets}
Let  $\Bbbk$ be a fixed field
of characteristic zero. By $\vect$ we denote the symmetric monoidal category of $\Bbbk$-vector spaces under the tensor product over $\Bbbk$, and by  $\coalg$  the category of counital coassociative coalgebras over $\Bbbk$, i.e. comonoids in $\vect$. In calculations for coalgebras we use the Heyneman-Sweedler convention \cite{S-69}, e.g. for the comultiplication $\Delta: C\rightarrow C\otimes C$ we use the notation $\Delta(c)=c_{(1)}\otimes c_{(1)}$.

    \vspace{4em}
    \begin{definition}
         The {\bf topos of quantum sets} $q\set$     
     is the topos of  presheaves on
$\coalg$ with natural transformations as morphisms.  
It is a symmetric monoidal category  with respect to an argument-wise product  as a monoidal product and the singleton-constant presheaf as a monoidal unit.
    \end{definition}

Note that  the coalgebras of rank zero and one represent presheaves which are the initial and the terminal objects of $q\set$\!, respectively. Moreover, 
the initial object is monoidally absorbing and the terminal one is a monoidal unit. These we will call the  {\bf quantum empty set} and the {\bf quantum singleton} and, for the reasons which will be explained in subsection \ref{set}, denote classically as $\emptyset$ and $\{\bullet\}$, respectively.

  \begin{definition}
   The  internal language and the internal logic \cite{ML-M-97} of the topos $q\set$     
will be called {\bf quantum language} and  {\bf quantum logic}, respectively.
    \end{definition}
  \begin{definition}
   A quantum set is called {\bf representable} if it is representable as a presheaf \cite{ML-M-97}.  
    \end{definition}

\subsubsection{Quantum elements}

    \begin{definition}
For every quantum set $X$ we define the category of {\bf quantum elements}, whose objects are pairs $(C, x)$ where $C$ is a finitely dimensional coalgebra, $x\in X(C)$, and morphisms $(C', x')\rightarrow (C, x)$ being coalgebra maps $\pi: C'\rightarrow C$ such that $X(\pi)(x)=x'$, with an obvious composition.
     \end{definition}

This definition is based on the general notion of the \emph{category of elements} of a presheaf \cite{ML-M-97}.   The \emph{Fundamental Theorem on Coalgebras} \cite{S-69} saying that every coalgebra is a colimit of finite dimensional coalgebras, the fact that finite dimensional ones form a \emph{small subcategory of compact objects}, and finally the \emph{Density Theorem} \cite{ML-M-97} saying that every presheaf on a small category is a colimit of representable ones, altogether can be summarised to what in this context should be called the {\bf Quantum Extensionality Principle}.

\subsubsection{Quantum elements in classical mathematics}
\begin{example}
    For $S = \{ \bullet \}$ (a singleton), we recover the set $T$ as the set of its classical elements
    $$\Y (\Bbbk T)(\Bbbk \{ \bullet \}) =  T.$$

If $T=\emptyset$ (empty set), then $\Y (\Bbbk \emptyset)$ is an initial object in $q\set $.

If $T=\{ \bullet \}$ (a singleton), $\Y (\Bbbk \{ \bullet \})$ is a terminal object in $q\set$.

By virtue of Theorem \ref{setqset} it means that the singleton regarded as a quantum set (quantum singleton) has a single quantum element. Since this element is classical, the quantum singleton is an example of a quantum set with no non-classical elements.

\end{example}

\begin{example}
{\bf  Group-like elements.} Let $\Bbbk S$ be the $\Bbbk-$linearization of a set $S$ with its standard coalgebra structure.  For $S = \{ \bullet \}$ (a singleton), we obtain $\Y (D)(\Bbbk \{ \bullet \}) = G(D)$, \emph{the set of gruplike elements} of the coalgebra $D$. They can be interpreted as classical elements of the quantum set $\Y (D)$. In general,  $$\Y (D)(\Bbbk S) = \Big\{g_s \in G(D)\ \Big| \ s \in S\Big\},$$  is a family of grouplike elements of the coalgebra $D$, i.e.
    $$ \Delta (g_{s}) = g_{s} \otimes g_{s}, 
    \quad \varepsilon(g_s) = 1,$$
    indexed by the set $S$.
\end{example}

\begin{example}{\bf Primitive elements.} Let $C = \Bbbk \{\bullet,  \text{\Flatsteel}\hspace{-0.45em} \blacktriangleright  \}$
    be a coalgebra with 
    $$\Delta(\bullet) = \bullet \otimes \bullet, \quad  
    \Delta(\text{\Flatsteel}\hspace{-0.45em} \blacktriangleright ) = \bullet \hspace{0.25em} \otimes \text{\Flatsteel}\hspace{-0.45em} \blacktriangleright   +\  \text{\Flatsteel}\hspace{-0.45em} \blacktriangleright  \otimes \hspace{0.25em}\bullet,$$
    $$\varepsilon(\bullet) = 1, \quad \varepsilon(\text{\Flatsteel}\hspace{-0.45em} \blacktriangleright  ) = 0$$

    The canonical injective coalgebra map $\Bbbk \{\bullet \} \rightarrow \Bbbk \{\bullet,  \text{\Flatsteel}\hspace{-0.5em} \blacktriangleright  \}$ induces a surjective map on quantum elements
    $$\Y (D)(\Bbbk \{\bullet, \text{\Flatsteel}\hspace{-0.5em} \blacktriangleright  \}) \rightarrow \Y (D)(\Bbbk \{ \bullet \})$$
which  coincides with the canonical projection
\begin{align*}
\pi: \coprod_{g\in G(D)}P_{g}(D)\rightarrow G(D),
\end{align*}
where the preimage of an element $g \in G(D)$
    $$\pi ^{-1} (g) = P_g(D):=\Big\{ p \in D \ \Big| \ \Delta(p) = p\otimes g + g \otimes p, \quad \varepsilon(p) = 0 \Big\}$$
    is the subset of $g-$primitive elements of $D$. 

    In particular, a group and a Lie algebra can be seen as elements of (representable) quantum sets as follows.
    \begin{enumerate}
        \item $\Y(\Bbbk G)(\Bbbk\{\bullet\}) = G$
        \item $\Y(U(\mathfrak{g}))(\Bbbk\{\bullet,\text{\Flatsteel} \hspace{-0.45em} \blacktriangleright \}) = \mathfrak{g}$
    \end{enumerate}
\end{example}

\begin{example}{\bf Quantum elements of finite decomposition categories}
    \begin{definition}
        If every morphism in a category $\mathbcal{D} $ has finitely many decompositions, we say that this category has {\bf  finite decomposition} property, f.d. in short.
    \end{definition}

For any small category with the finite decomposition property the linearization of the set of morphisms $D:=\Bbbk \mathbcal{D}$ admits a coalgebra structure with
\begin{equation*}
                \Delta(m) = \sum_{m_1\circ m_2 = m} m_1 \otimes m_2, \qquad \varepsilon(m) = \begin{cases}
                1 \qquad\text{$m$ an identity}\\
                0 \qquad\text{otherwise}
                \end{cases}
\end{equation*}

\begin{proposition}
For every small category with f.d. property and $D$ as above,
\begin{equation*}
\Y(D)(C) = 
\{ 
x: \mathbcal{D}\rightarrow  F(C) 
\   \mid \ x\  \text{\rm  a   {\bf functor of finite support,}}
\  \sum_{\mathbcal{d}\in {\rm Ob(\mathbcal{D})}}x_{\hspace{-0.15em}\mathbcal{d}}=1
\}. 
\end{equation*}
where by $F(C)$ we mean its multiplicative monoid with zero and we identify objects with identity morphisms.
\end{proposition}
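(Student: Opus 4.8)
The plan is to unfold the statement as $\Y(D)(C)=\coalg(C,D)$ and to describe coalgebra maps $f\colon C\to D=\Bbbk\mathbcal{D}$ in the distinguished basis of $D$ given by the morphisms of $\mathbcal{D}$. First I would note that, since the morphisms $m\in\mathrm{Mor}(\mathbcal{D})$ form a $\Bbbk$-basis of $\Bbbk\mathbcal{D}$, an arbitrary linear map $f\colon C\to\Bbbk\mathbcal{D}$ is the same datum as a family $(x_m)_{m\in\mathrm{Mor}(\mathbcal{D})}$ of functionals $x_m:=(m^*\circ f)\in C^*=F(C)$, subject only to the requirement that for each $c\in C$ one has $x_m(c)=0$ for all but finitely many $m$; this pointwise finiteness is exactly what it means for $f(c)=\sum_m x_m(c)\,m$ to be a well-defined (finite) element of $\Bbbk\mathbcal{D}$, and it is what the proposition records as the finite support of $x$. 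Recovering $f$ from $x$ is given by that same formula, so the content of the proposition is to match the two coalgebra axioms on $f$ with the functoriality and normalization conditions on $x$.

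Next I would translate the counit axiom $\varepsilon_D\circ f=\varepsilon_C$. Using $\varepsilon_D(m)=1$ when $m$ is an identity and $0$ otherwise, the left-hand side applied to $c$ collapses to $\sum_{\mathcal{d}\in\mathrm{Ob}(\mathbcal{D})}x_{\mathrm{id}_{\mathcal{d}}}(c)$, so the axiom reads $\sum_{\mathcal{d}}x_{\mathcal{d}}=\varepsilon_C$, where $x_{\mathcal{d}}:=x_{\mathrm{id}_{\mathcal{d}}}$ and we identify objects with their identity morphisms. Since $\varepsilon_C$ is precisely the unit $1$ of the convolution algebra $F(C)=C^*$, this is the normalization $\sum_{\mathcal{d}}x_{\mathcal{d}}=1$.

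Then I would expand the comultiplicativity axiom $\Delta_D\circ f=(f\otimes f)\circ\Delta_C$ in the basis $\{m_1\otimes m_2\}$ of $\Bbbk\mathbcal{D}\otimes\Bbbk\mathbcal{D}$. On the left, $\Delta_D(f(c))=\sum_{m_1,m_2\ \mathrm{composable}}x_{m_1\circ m_2}(c)\,m_1\otimes m_2$ by definition of $\Delta_D$ and the finite decomposition property, which is exactly what makes $\Delta_D(m)=\sum_{m_1\circ m_2=m}m_1\otimes m_2$ a finite sum and $D$ a genuine coalgebra. On the right, Sweedler notation gives $(f\otimes f)(\Delta_C(c))=\sum_{m_1,m_2}(x_{m_1}*x_{m_2})(c)\,m_1\otimes m_2$, where $*$ is the convolution product of $F(C)$. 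Comparing coefficients yields $x_{m_1\circ m_2}=x_{m_1}*x_{m_2}$ for every composable pair and $x_{m_1}*x_{m_2}=0$ for every non-composable pair. These two conditions say exactly that $x$ is a functor $\mathbcal{D}\to F(C)$: it is multiplicative on composable morphisms, it kills incomposable pairs in the monoid with zero, and consequently each object-value $x_{\mathcal{d}}$ is an idempotent serving as a two-sided local unit for the morphisms incident to $\mathcal{d}$. Together with the previous paragraph this establishes the claimed bijection, and naturality in $C$ is immediate since all constructions are functorial in $C$ through $C^*$.

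The step I expect to be the main obstacle is the clean bookkeeping in the coefficient comparison: one must match the order of composition with the order of the convolution product so that $x_{m_1\circ m_2}=x_{m_1}*x_{m_2}$ comes out with the correct handedness, and one must articulate precisely in what sense the resulting data deserve to be called a functor into the monoid-with-zero $F(C)$ --- namely that the object-values are idempotent local units summing to $1$ rather than the global unit, and that incomposability is absorbed by the zero of $F(C)$. Care is also needed to keep the finite support as the pointwise condition (it is automatic from the codomain $\Bbbk\mathbcal{D}$), since the global finite-support reading would fail already for the identity map of an infinite-dimensional $D=\Bbbk\mathbcal{D}$.
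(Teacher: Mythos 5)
Your proof is correct and is essentially the paper's own argument: the paper likewise expands a coalgebra map $C\to\Bbbk\mathbcal{D}$ in the morphism basis as a family of functionals $x_m\in F(C)$ and observes that compatibility with the comultiplication and the counit is tantamount to $x_{m_1\circ m_2}=x_{m_1}x_{m_2}$ (non-composable pairs being absorbed by the zero of $F(C)$) and $\sum_{d\in{\rm Ob}(\mathbcal{D})}x_d=1$, respectively. Your additional care --- the pointwise reading of finite support, the handedness of the convolution, and the idempotent local units at objects --- only makes explicit what the paper's one-line proof leaves implicit.
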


\begin{proof}
Writing $x(c)=\sum_{m}x_{m}(c) m$ and  regarding $m\mapsto x_{m}\in F(C)$ as a function with finite support, we see that $x$ being compatible with comultiplication and the counit is tantamount to 
$$x_{m_1\circ m_2} = x_{m_1}x_{m_2},\ \ \  \sum_{\mathbcal{d}\in {\rm Ob(\mathbcal{D})}}x_{\hspace{-0.15em}\mathbcal{d}}=1,  $$
respectively. 
\end{proof}
Note that for a discrete category, we obtain the complete system of orthogonal idempotents, as before.

Here are the three most important examples of categories with finite decomposition. In the first two we regard a monoid  as a category with a single object.
\begin{enumerate}
\item  $\mathbcal{C} = (\N, +, 0)$.   
$\quad D = \Bbbk \Big\{ d_{n} \  \Big| \ n \in \N \Big\},$
    $$\Delta : d_n \rightarrow \sum_{n' + n'' = n} d_{n'}\otimes d_{n''}, \qquad\varepsilon(d_n) = \delta_{n,0},$$
    $$\Y (D)(C) = \Big\{  x\in F(C)\ \Big|\  x\ {\bf nilpotent}\Big\}.$$
Here $x$ is the coefficient of the coalgebra map $C\rightarrow D$ at $d_1$. Since $F(D)\cong \Bbbk[[t]]$ is the algebra of formal power series, and $\Y (D)$ admits a distinguished element $\{ \bullet\}\rightarrow \Y (D)$ corresponding to $d_{0}$, dual to the augmentation $F(D)\cong \Bbbk[[t]]\rightarrow \Bbbk$, $t\mapsto 0$, $\Y (D)$  can be regarded as a base parameterizing formal  deformations of quantum sets. In Section ??, it will be used to define formal deformation quantisations of classical sets.
\item $\mathbcal{C} = (\N_{>0}, \times, 1), \quad D = \Bbbk \Big\{ d_{n} \  \Big| \ n \in \N_{>0} \Big\},$
    $$\Delta : d_n \rightarrow \sum_{n' \times n'' = n} d_{n'}\otimes d_{n''}, \qquad\varepsilon(d_n) = \delta_{n,1},$$
    $$\Y (D)(C) = \Big\{ \{x_p\}_{p\in P }\subset F(C)\  \Big| \  \{x_p\}_{p\in P }\ {\bf finite\  family\  of\  commuting\  nilpotents} \Big\}$$
Here $x_p$  is the coefficient of the coalgebra map $C\rightarrow D$ at $d_p$,  while  $P$ is a finite set of primes.
\item $\mathbcal{C}=I\times I$, where $I$ is a finite set, with objects $i\in I$ and arrows $(i, j)\in I\times I$ (we adopt the convention saying that $(i, j)$ is an arrow from $j$ to $i$), the composition  $(i, j)\circ  (j, k):=(i, k)$ and identities $(i, i)$, is called the {\bf pair category} on $I$. Then
    $$D := \Bbbk \Big\{ d_{i,j} \  \Big| \ i,j \in \left\{1, \ldots, n \right\} \Big\},$$
    $$ \Delta(d_{i,k}) = \sum_j d_{i, j} \otimes d_{j, k}, \quad \varepsilon(d_{i, k}) = \delta_{i, k} $$
is called the {\bf comatrix coalgebra} and represents a quantum set  of systems of {\bf matrix units} in $F$, i.e. 
$$\Y (D)(C) = \Big\{ \{x_{ij}\}_{\ i,j\in I }\  \Big| \  x_{ij}x_{j'k}=\delta_{j, j'}x_{ik}, \sum_{ i}x_{ii} = 1\ {\rm in}\  F(C).\Big\}.$$
\end{enumerate}
\end{example}   

\begin{example}{\bf Finite-dimensional group representations as quantum elements.}

    For an affine group scheme $G$ over $\Bbbk$, and a finitely dimensional $\Bbbk-$vector space $V$ we can form a set $Rep_\Bbbk(G, V)$ of $\mathcal{O}(G)-$comodule structures on $V$. Then
$${\rm Rep}_\Bbbk(G, V) = \Y (\mathcal{O}(G))(V^* \otimes V).$$
\end{example}

\subsubsection{Quantum Cartesian product}
  By the Yoneda embedding theorem \cite{ML-M-97} , $\coalg$ and  $q\set$ are related by the fully faithful Yoneda functor
$$\coalg \xrightarrow{\Y} {\rm PSh}(\coalg)=: q\set,$$
$$\Y(D) = \coalg(-, D).$$
   Since both categories 
$$\big(\coalg, \otimes, \Bbbk \{\bullet\}\big)\quad \text{and}\quad  \big({\rm PSh}(\coalg), \times, \{\bullet\}\big)$$
   are {\bf symmetric monoidal} 
   one could ask how the Yoneda embedding interacts with those structures.
To answer this question,  we need Segal's notion of a partial monoid \cite{S-73} generalized and adapted to our categorical context.

\begin{definition}
We call a full subcategory $\cat$ of $q\set$ {\bf partial monoidal} if it contains a monoidal unit $\{\bullet\}$ and for any two objects $X_1$,    $X_2$  of $\cat$ in this subcategory there is a sub-presheaf $X_1\times_{\cat}X_2$ in $\cat$ of the argument-wise product $X_1\times X_2$ of presheaves in $q\set$, i.e.
$$\big( X_1\times_{\cat}X_2\big)(C)\subseteq X_1(C)\times X_2(C),$$
\end{definition} 
\noindent such that for any three objects $X_1$, $X_2$, $X_3$ of $\cat$
\begin{equation}\label{(1,2),3}
\begin{split}
(x_1, x_2 )\in ( X_1\times_{\cat}  X_2 )(C),\ \ 
(( x_1, x_2 ), x_3)\in ((X_1\times_{\cat}  X_2 )\times_{\cat} X_3)(C)
\end{split}
\end{equation}
if and only if 
\begin{equation}\label{1,(2, 3)}
(x_2, x_3 )\in ( X_2\times_{\cat}  X_3 )(C), \ \ 
(x_1, (x_2 , x_3))\in (X_1\times_{\cat}  (X_2 \times_{\cat} X_3))(C),
\end{equation}
and then, under the identification
$$\big( X_1(C)\times X_2(C) \big)\times X_3(C) \cong   X_1(C)\times\big( X_2(C)\times  X_3(C)\big), $$
the equality
\begin{align}\label{assoc}
(( x_1, x_2 ), x_3) =  (x_1, (x_2 , x_3))
\end{align}
holds, and for every object $X$ in $\cat$ the containments $\{\bullet\}\times_{\cat}X \subseteq \{\bullet\}\times X $ and $X \times_{\cat} \{\bullet\} \subseteq X \times\{\bullet\}$ are equalities.

\begin{theorem}\label{sub}
The full subcategory $\cat$ of representable quantum sets is a partial monoidal symmetric subcategory of  $q\set$.
\end{theorem}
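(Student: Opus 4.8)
The plan is to realise the partial product on a pair of representables $X_1=\Y(D_1)$, $X_2=\Y(D_2)$ by the representable $\Y(D_1\otimes D_2)$, embedded into the argument-wise product via the two canonical augmentations. First I would note that $\mathrm{id}\otimes\varepsilon\colon D_1\otimes D_2\to D_1$ and $\varepsilon\otimes\mathrm{id}\colon D_1\otimes D_2\to D_2$ are coalgebra maps, so applying $\Y$ and pairing gives a natural transformation $\Y(D_1\otimes D_2)\to \Y(D_1)\times\Y(D_2)$, i.e. on $C$-points $f\mapsto\big((\mathrm{id}\otimes\varepsilon)f,(\varepsilon\otimes\mathrm{id})f\big)$. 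A short Sweedler computation shows that any coalgebra map $f\colon C\to D_1\otimes D_2$ is recovered from its two components $f_1,f_2$ as $f=(f_1\otimes f_2)\Delta_C$; hence the transformation is injective on every $C$-point, therefore a monomorphism of presheaves, and I may take $X_1\times_{\cat}X_2$ to be its image subpresheaf. Explicitly, $(x_1,x_2)\in (X_1\times_{\cat}X_2)(C)$ precisely when $(x_1\otimes x_2)\Delta_C\colon C\to D_1\otimes D_2$ is again a coalgebra map. Being isomorphic to $\Y(D_1\otimes D_2)$, this subpresheaf is representable, hence lies in $\cat$; and since $\Bbbk\{\bullet\}$ is terminal in $\coalg$, the monoidal unit $\{\bullet\}=\Y(\Bbbk\{\bullet\})$ belongs to $\cat$.

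Next I would dispose of the unit axiom: because $\Bbbk\{\bullet\}\otimes D\cong D$ and $(\varepsilon\otimes\mathrm{id})\Delta_C=\mathrm{id}_C$ by counitality, every pair $(x_0,x)$ with $x_0\in\Y(\Bbbk\{\bullet\})(C)$ is admissible, so both containments $\{\bullet\}\times_{\cat}X\subseteq\{\bullet\}\times X$ and $X\times_{\cat}\{\bullet\}\subseteq X\times\{\bullet\}$ are equalities. For associativity, with $X_i=\Y(D_i)$ I introduce the triple map $\psi=(x_1\otimes x_2\otimes x_3)\Delta_C^{(2)}\colon C\to D_1\otimes D_2\otimes D_3$ built from the iterated comultiplication. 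The decisive observation is that if $\psi$ is a coalgebra map then so are its two binary shadows $\phi_{12}=(\mathrm{id}\otimes\mathrm{id}\otimes\varepsilon)\psi=(x_1\otimes x_2)\Delta_C$ and $\phi_{23}=(\varepsilon\otimes\mathrm{id}\otimes\mathrm{id})\psi=(x_2\otimes x_3)\Delta_C$, being composites of $\psi$ with the augmentations. Consequently each of the bracketed admissibility conditions \eqref{(1,2),3} and \eqref{1,(2, 3)} is equivalent to the single statement that $\psi$ is a coalgebra map, so they are equivalent to one another; and when they hold, both iterated elements are carried to $\psi$ under the associativity isomorphism $(D_1\otimes D_2)\otimes D_3\cong D_1\otimes(D_2\otimes D_3)$, which yields \eqref{assoc}.

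The main obstacle, and the only genuinely quantum point, is the symmetry: I must verify that the argument-wise swap $(x_1,x_2)\mapsto(x_2,x_1)$ restricts to the subpresheaves, i.e. that $(x_1\otimes x_2)\Delta_C$ is a coalgebra map iff $(x_2\otimes x_1)\Delta_C$ is. This is not the naive consequence of $\tau\colon D_1\otimes D_2\cong D_2\otimes D_1$, since $\tau\circ(x_1\otimes x_2)\Delta_C$ and $(x_2\otimes x_1)\Delta_C$ differ by a flip of the two Sweedler legs of $C$ and hence disagree for non-cocommutative $C$. The key computation is to take the comultiplicativity identity for $\psi_{12}=(x_1\otimes x_2)\Delta_C$, namely the equality in $D_1\otimes D_2\otimes D_1\otimes D_2$ of $\Delta_{D_1\otimes D_2}\,\psi_{12}$ and $(\psi_{12}\otimes\psi_{12})\Delta_C$, and to contract its two outer legs with $\varepsilon_{D_1}$ and $\varepsilon_{D_2}$. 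Using that $x_1,x_2$ intertwine the counits, this collapses exactly to the partial-cocommutativity relation $x_2(c_{(2)})\otimes x_1(c_{(1)})=x_2(c_{(1)})\otimes x_1(c_{(2)})$, that is $\tau\circ\psi_{12}=\psi_{21}$. Thus comultiplicativity of $\psi_{12}$ already forces $\psi_{21}$ to be the composite of the coalgebra map $\psi_{12}$ with the coalgebra isomorphism $\tau$, whence $\psi_{21}$ is a coalgebra map and the swap restricts; exchanging the roles of the indices gives the converse. This is the abstract Eckmann--Hilton obstruction advertised in the introduction, and once it is resolved the coherence isomorphisms for $\times_{\cat}$ are those transported from the associativity, unit and symmetry constraints of $(\coalg,\otimes,\Bbbk\{\bullet\})$ along the fully faithful embedding $\Y$.
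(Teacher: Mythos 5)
Your proof is correct, but it is organized around a different definition of the partial product than the paper's, and the work gets distributed differently as a result. The paper defines $(X_1\times_{\cat}X_2)(C)$ by the explicit partial-cocommutativity equation $x_1(c_{(1)})\otimes x_2(c_{(2)})=x_1(c_{(2)})\otimes x_2(c_{(1)})$ of \eqref{def(1, 2)}, and then its main labor is the long Sweedler computation showing that this equation forces $c\mapsto x_1(c_{(1)})\otimes x_2(c_{(2)})$ to be a coalgebra map (hence representability), followed by a separate reduction of the two bracketed admissibility conditions to systems of identities that are matched using coassociativity. With that definition, symmetry is manifest, since the defining equation is preserved under the flip. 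You instead \emph{define} admissibility as ``the assembled map $(x_1\otimes x_2)\Delta_C$ is a coalgebra map,'' i.e.\ you take the image of the monomorphism $\Y(D_1\otimes D_2)\to\Y(D_1)\times\Y(D_2)$ given by the two augmentations; your reconstruction lemma $f=(f_1\otimes f_2)\Delta_C$ makes representability essentially free, and your ``shadow'' observation (composing a coalgebra map with $\mathrm{id}\otimes\mathrm{id}\otimes\varepsilon$ or $\varepsilon\otimes\mathrm{id}\otimes\mathrm{id}$ again gives coalgebra maps) collapses both bracketed conditions to the single statement that the ternary map $\psi$ is a coalgebra map, which is a cleaner route to associativity than the paper's. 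The price is exactly where you identified it: symmetry is no longer manifest, and your counit-contraction argument showing that comultiplicativity of $\psi_{12}$ forces $\tau\circ\psi_{12}=\psi_{21}$ is the necessary (and correct) extra step; note that this contraction is the same computational move the paper uses when proving equivalence of the bracketings. Taken together, your argument and the paper's also establish that the two admissibility conditions agree — the paper proves that the cocommutation equation implies the assembled map is a coalgebra map, and your symmetry step proves the converse — so the two constructions produce the same partial monoidal symmetric structure on $\cat$.
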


\begin{proof}
For $X_{i}:=\Y(D_{i})$, we define $( X_1\times_{\cat}  X_2 )(C)$ as
\begin{equation}\label{def(1, 2)}
\{ 
(x_1, x_2)\in  X_1(C)\times  X_2 (C)\ \mid\ 
x_1(c_{(1)})\otimes x_2(c_{(2)})
=x_1(c_{(2)})\otimes x_2(c_{(1)})
\in D_{1}\otimes D_{2}
\}.
\end{equation}
To prove that $X_1\times_{\cat}  X_2 $ is indeed representable, we define
\begin{align}\label{1, 2}
(x_1, x_2)(c):=x_1(c_{(1)})\otimes x_2(c_{(2)})
\end{align}
and show that it defines a coalgebra map $(x_1, x_2): C\rightarrow D_1\otimes D_2$.

First, using (\ref{1, 2}),  (\ref{def(1, 2)}), coassociativity, the fact that $x_i: C\rightarrow D_{i}$ are coalgebra maps and the diagonal comultiplication of $D_1\otimes D_2$, we check that $(x_1, x_2)$ respects comultiplication
\begin{align}
  \  & (x_1, x_2)(c_{(1)})\otimes (x_1, x_2)(c_{(2)})\\
 =\ & x_1(c_{(1)(1)})\otimes  x_2(c_{(1)(2)})\otimes  x_1(c_{(2)(1)})\otimes  x_2(c_{(2)(2)})\\
=\ & x_1(c_{(1)})\otimes  x_2(c_{(2)})\otimes  x_1(c_{(3)})\otimes  x_2(c_{(4)})\\
=\ & x_1(c_{(1)})\otimes  x_2(c_{(2)(1)})\otimes  x_1(c_{(2)(2)})\otimes  x_2(c_{(3)})\\
=\ & x_1(c_{(1)})\otimes  x_2(c_{(2)(2)})\otimes  x_1(c_{(2)(1)})\otimes  x_2(c_{(3)})\\
=\ & x_1(c_{(1)})\otimes  x_2(c_{(3)})\otimes  x_1(c_{(2)})\otimes  x_2(c_{(4)})\\
=\ & x_1(c_{(1)(1)})\otimes  x_2(c_{(2)(1)})\otimes  x_1(c_{(1)(2)})\otimes  x_2(c_{(2)(2))})\\
=\ & x_1(c_{(1)})_{(1)}\otimes  x_2(c_{(2)})_{(1)}\otimes  x_1(c_{(1)})_{(2)}\otimes  x_2(c_{(2)})_{(2)}\\
=\ & (x_1(c_{(1)})\otimes  x_2(c_{(2)})_{(1)}\otimes  (x_1(c_{(1)})\otimes  x_2(c_{(2)})_{(2)}\\
=\ & (x_1, x_2)(c)_{(1)}\otimes (x_1, x_2)(c)_{(2)}.                                                              
\end{align}
Next, using (\ref{1, 2}),  the diagonal counit of $D_1\otimes D_2$,  the fact that $x_i: C\rightarrow D_{i}$ are coalgebra maps and counitality, we check that it respects the counit
\begin{align}
  \  & \varepsilon\left((x_1, x_2)(c)\right)\\
 =\ & \varepsilon\left(x_1(c_{(1)})\otimes x_2(c_{(2)}) \right)\\
=\ & \varepsilon\left(x_1(c_{(1)})\right) \varepsilon\left(x_2(c_{(2)}) \right)\\
=\ & \varepsilon\left(c_{(1)}\right) \varepsilon\left(c_{(2)} \right)\\
=\ & \varepsilon\left(c_{(1)}\varepsilon(c_{(2)}) \right)\\
=\ & \varepsilon(c).                                                           
\end{align}

Now, we are to prove that the conditions  \eqref{(1,2),3} and  \eqref{1,(2, 3)} are satisfied. First, we use the definition \eqref{def(1, 2)} to rewrite them as follows
$$x_1(c_{(1)})\otimes x_2(c_{(2)})=x_1(c_{(2)})\otimes x_2(c_{(1)}),$$ 
$$(x_1, x_2)(c_{(1)})\otimes x_3(c_{(2)}) = (x_1, x_2)(c_{(2)})\otimes x_3(c_{(1)})$$
and
$$x_2(c_{(1)})\otimes x_3(c_{(2)})=x_2(c_{(2)})\otimes x_3(c_{(1)}),$$ 
$$x_1(c_{(1)})\otimes (x_2, x_3)(c_{(2)}) = x_1(c_{(2)})\otimes (x_2, x_3)(c_{(1)}).$$
By (\ref{1, 2}) they can be rewritten as
$$x_1(c_{(1)})\otimes x_2(c_{(2)})=x_1(c_{(2)})\otimes x_2(c_{(1)}),$$ 
$$x_1((c_{(1)(1)}))\otimes x_2(c_{(1)(2)})\otimes x_3(c_{(2)}) = x_1(c_{(2)(1)})\otimes x_2(c_{(2)(2)})\otimes x_3(c_{(1)}),$$
and
$$x_2(c_{(1)})\otimes x_3(c_{(2)})=x_2(c_{(2)})\otimes x_3(c_{(1)}),$$ 
$$x_1(c_{(1)})\otimes x_2(c_{(2)(1)})\otimes x_3(c_{(2)(2)}) = x_1(c_{(2)})\otimes x_2c_{(1)(1)})\otimes x_3(c_{(1)(2)}).$$

By applying the counit to the utmost left tensor factor in the triple tensor in the first pair of identities and to the utmost right tensor slot in the triple tensor of the second pair, using the fact that $x_i$ are  coalgebra maps and the Heyneman-Sweedler convention we get the following two systems of identities
$$x_1(c_{(1)})\otimes x_2(c_{(2)})=x_1(c_{(2)})\otimes x_2(c_{(1)}),$$
$$x_2(c_{(1)})\otimes x_3(c_{(2)})=x_2(c_{(2)})\otimes x_3(c_{(1)}),$$
$$ x_1((c_{(1)}))\otimes x_2(c_{(2)})\otimes x_3(c_{(3)}) = x_1(c_{(2)})\otimes x_2(c_{(3)})\otimes x_3(c_{(1)}),$$
and
$$x_2(c_{(1)})\otimes x_3(c_{(2)})=x_2(c_{(2)})\otimes x_3(c_{(1)}),$$
$$x_1(c_{(1)})\otimes x_2(c_{(2)})=x_1(c_{(2)})\otimes x_2(c_{(1)}),$$
$$ x_1(c_{(1)})\otimes x_2(c_{(2)})\otimes x_3(c_{(3)}) = x_1(c_{(3)})\otimes x_2c_{(1)})\otimes x_3(c_{(2)}).$$

To prove equivalence of the thus rewritten two systems, observe that first two identities in both systems differ only by their order. hence it is enough to prove that assuming them allows us to make the right hand sides of both remaining identities  equal. To this end, it is enough to use those first two identities and coassociativity in the Heyneman-Sweedler convention as follows
\begin{align}
   \ & x_1(c_{(2))})\otimes x_2(c_{(3)})\otimes x_3(c_{(1)})\\
 =\ & x_1(c_{(2)(1))})\otimes x_2(c_{(2)(2)})\otimes x_3(c_{(1)})\\
=\ & x_1(c_{(2)(2))})\otimes x_2(c_{(2)(1)})\otimes x_3(c_{(1)})\\
=\ & x_1(c_{(3)})\otimes x_2(c_{(2)})\otimes x_3(c_{(1)})\\
=\ & x_1(c_{(2)})\otimes x_2(c_{(1)(2)})\otimes x_3(c_{(1)(1)})\\
=\ & x_1(c_{(2)})\otimes x_2(c_{(1)(1)})\otimes x_3(c_{(1)(2)})\\
=\ & x_1(c_{(3)})\otimes x_2(c_{(1)})\otimes x_3(c_{(2)}).                                                         
\end{align}

Finally, assuming that this identities hold, we check (\ref{assoc}) using (\ref{1, 2}) and associativity in the Heyneman-Sweedler convention as follows.
\begin{align}
  \  & \left((x_1, x_2), x_3\right)(c)\\
 =\ & (x_1, x_2)(c_{(1)})\otimes x_3(c_{(2)})\\
=\ & x_1(c_{(1)(1)})\otimes x_2(c_{(1)(2)})\otimes x_3(c_{(2)})\\
=\ & x_1(c_{(1)})\otimes x_2(c_{(2)(1)})\otimes x_3(c_{(2)(2)})\\
=\ & x_1(c_{(1)})\otimes (x_2, x_3)(c_{(2)})\\
=\ & (x_1, (x_2, x_3))(c).                                                           
\end{align}

Now, to check axioms related to the monoidal unit, observe that for $X=\Y(D)$, the presheaves $\{\bullet\}\times_{\cat} X$ and $X\times_{\cat}\{\bullet\}$ evaluated on $C$ become both $X(C)$ by the counit identities, since the only element of $\{\bullet\}(C)=\Y(\Bbbk\{\bullet\})(C)$ is the counit of $C$.

\end{proof}
\begin{corollary}
  The Yonneda embedding of $\coalg$ into $q\set = {\rm PSh}(\coalg)$ admits a structure of a {\bf  symmetric partial monoidal} functor.
\end{corollary}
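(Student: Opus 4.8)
The plan is to recast Theorem~\ref{sub} in functorial language by producing the coherence data of a (strong) symmetric partial monoidal functor and checking the structure axioms, nearly all of which are already discharged inside the proof of the theorem. Recall that such a structure consists of a unit comparison $\phi_0 \colon \{\bullet\} \to \Y(\Bbbk\{\bullet\})$ together with, for each pair of coalgebras $(D_1, D_2)$, a natural comparison morphism $\Y(D_1) \times_\cat \Y(D_2) \to \Y(D_1 \otimes D_2)$, all subject to unit, associativity and symmetry coherence; the adjective \emph{partial} signals that these data are defined only on the subfunctor $\times_\cat$ rather than on the full argument-wise product.

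First I would extract the comparison morphism directly from formula~\eqref{1, 2}. The assignment $(x_1, x_2) \mapsto \bigl[\,c \mapsto x_1(c_{(1)}) \otimes x_2(c_{(2)})\,\bigr]$, shown in the proof of Theorem~\ref{sub} to land in $\coalg(C, D_1 \otimes D_2)$, is natural in $C$ because precomposition with a coalgebra map commutes with the Sweedler formula, and it therefore assembles into a morphism of presheaves
$$\phi_{D_1, D_2} \colon \Y(D_1) \times_\cat \Y(D_2) \longrightarrow \Y(D_1 \otimes D_2).$$
I would then argue that $\phi_{D_1, D_2}$ is in fact an isomorphism, so that $\Y$ is \emph{strong} partial monoidal. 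The inverse sends a coalgebra map $f \colon C \to D_1 \otimes D_2$ to the pair $\bigl((\mathrm{id} \otimes \varepsilon)\circ f,\ (\varepsilon \otimes \mathrm{id})\circ f\bigr)$; a short counit computation shows that this pair automatically satisfies the defining condition~\eqref{def(1, 2)} and that the two assignments are mutually inverse. This upgrades the representability claim of the theorem to a natural isomorphism $\Y(D_1) \times_\cat \Y(D_2) \cong \Y(D_1 \otimes D_2)$.

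Next I would settle coherence. The unit comparison is the canonical identification $\{\bullet\} = \Y(\Bbbk\{\bullet\})$ recorded after the first definition, and the unit coherence is exactly the equalities $\{\bullet\} \times_\cat X = X = X \times_\cat \{\bullet\}$ obtained at the end of the theorem's proof from the counit identities. The associativity pentagon is precisely identity~\eqref{assoc}: the two ways of iterating $\phi$ to reach $\Y(D_1 \otimes D_2 \otimes D_3)$ agree on their common domain. For symmetry, I would observe that the defining condition~\eqref{def(1, 2)} is invariant under interchanging the two factors, so the flip $\tau \colon D_1 \otimes D_2 \to D_2 \otimes D_1$ intertwines $\phi_{D_1, D_2}$ with $\phi_{D_2, D_1}$ through $\Y(\tau)$; this is a one-line check in the Heyneman--Sweedler calculus.

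The genuinely delicate point --- the reason the corollary is not purely formal --- is that the coherence data live only on the partial product, so before comparing the two sides of the pentagon one must know that their domains of definition coincide as subobjects. That coincidence is not automatic; it is exactly the biconditional \eqref{(1,2),3} $\Leftrightarrow$ \eqref{1,(2, 3)} established in Theorem~\ref{sub}. Since that theorem already supplies both this biconditional and the value equality~\eqref{assoc}, the residual work for the corollary is the bookkeeping of naturality and symmetry, each reducing to a brief coassociativity or counit manipulation.
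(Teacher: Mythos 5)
Your proposal is correct and takes essentially the same route as the paper's proof: the unit comparison given by the counit, the partial binatural comparison \eqref{1, 2} defined on $\Y(D_1)\times_{\cat}\Y(D_2)$, and the associativity, symmetry and unit coherence discharged by Theorem \ref{sub} (the biconditional \eqref{(1,2),3}$\Leftrightarrow$\eqref{1,(2, 3)}, the equality \eqref{assoc}, and the counit identities). Your additional claim that the comparison is an isomorphism, with inverse $f\mapsto\big((\mathrm{id}\otimes\varepsilon)\circ f,\ (\varepsilon\otimes\mathrm{id})\circ f\big)$, is a correct strengthening that the paper does not state explicitly; it is in fact what substantiates the representability assertion made inside the proof of Theorem \ref{sub}.
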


\begin{proof}
Let $\cat$ be the full subcategory of representable presheaves in $q\set$. It is the essential image of the Yonneda embedding $\Y$.
   First, note that the compatibility of {\bf monoidal units} morphism under $\Y$
    $$\{ \bullet \} \xrightarrow{\simeq}\Y\big(\Bbbk \{ \bullet \}\big)$$ 
    is {\bf total}, where its $C-$component reads as
$$\{\bullet\}(C) = \{\bullet\} \rightarrow \Y(\Bbbk\{\bullet\})(C) = \coalg(C, \Bbbk\{\bullet\}),$$
$$\bullet \mapsto \varepsilon_C.$$
However, the {\bf binatural transformation} 
$$\Y(D_1) \times \Y(D_2) \rightarrow \Y(D_1 \otimes D_2)$$
is only {\bf partial} with the domain $\Y(D_1) \times_{\cat} \Y(D_2)$, whose $C-$component reads as
$$ \big(\Y(D_1) \times_{\cat} \Y(D_2)\big)(C) \rightarrow \Y(D_1 \otimes D_2)(C),$$ 
i.e. consists of pairs 
$$(x_1, x_2) \in (\Y(D_1) \times_{\cat} \Y(D_2))(C)\subseteq \Y(D_1)(C) \times \Y(D_2)(C) 
$$
such that the map
$$ (x_1, x_2):=\big(c \mapsto x_1(c_{(1)}) \otimes x_2(c_{(2)})\big) \in \vect(C, D_1\otimes D_2)$$ 
belongs in fact to 
$\coalg(C, D\otimes D').$

Checking compatibility with associativity and symmetry, based on Theorem \ref{sub}, is left to the reader.
\end{proof}

\subsection{Sets as quantum sets}\label{set}
The following theorem, relating topos of sets and the topos of quantum sets, could be used to model a classical observer in the quantum world.
\begin{theorem}
The composition of the functor of linearization and the Yoneda embedding
 \begin{align*}
\set\xhookrightarrow{\ \ \Bbbk -\ \ }\coalg \xhookrightarrow{\ \ \Y\ \  }{\rm PSh}(\coalg)=: q\set
\end{align*} 
is a symmetric partial-monoidal full embedding.
\end{theorem}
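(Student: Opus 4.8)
The plan is to split the statement into its two assertions, \emph{full embedding} and \emph{symmetric partial-monoidal}, and to obtain each by composing the corresponding property of the two factors $\Bbbk-$ and $\Y$. For the embedding part, $\Y$ is fully faithful by the Yoneda embedding theorem, so it is enough to prove that linearization $\Bbbk-\colon \set\to\coalg$ is fully faithful; the composite of fully faithful functors is then fully faithful, and since both factors are injective on isomorphism classes of objects (for $\Bbbk-$ because $\Bbbk S\cong\Bbbk T$ as coalgebras forces a bijection of grouplikes $S\cong T$), the composite is a full embedding. Faithfulness of $\Bbbk-$ is clear, as $\Bbbk f$ is determined by $f$ on the grouplike basis. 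For fullness I would invoke the fact, recorded in the group-like-elements example, that $G(\Bbbk T)=T$: given a coalgebra map $\phi\colon\Bbbk S\to\Bbbk T$, each basis vector $s$ is grouplike, hence $\phi(s)$ is a grouplike element of $\Bbbk T$ and therefore equals a unique $f(s)\in T$, so $\phi=\Bbbk f$ by linearity.

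For the monoidal part I would combine two ingredients. First, linearization is strong symmetric monoidal through the natural coalgebra isomorphism $\Bbbk(S\times T)\cong\Bbbk S\otimes\Bbbk T$, $(s,t)\mapsto s\otimes t$ (one checks both sides are grouplike and that counits and symmetries match), sending the unit $\{\bullet\}$ to $\Bbbk\{\bullet\}=\Bbbk$. Second, the Yoneda embedding is symmetric partial-monoidal by the Corollary, with $\Y(D_1)\times_{\cat}\Y(D_2)\cong\Y(D_1\otimes D_2)$. Composing these, the image of $\set$ is a full subcategory $\cat_0$ of representables on which, for all sets $S,T$,
$$\Y(\Bbbk S)\times_{\cat}\Y(\Bbbk T)\;\cong\;\Y(\Bbbk S\otimes\Bbbk T)\;\cong\;\Y(\Bbbk(S\times T)),$$
so $\times_{\cat}$ does not leave $\cat_0$ and $\cat_0$ is a partial-monoidal subcategory. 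The comparison binatural transformation for the composite is the partial Yoneda comparison followed by $\Y$ of the strong-monoidal isomorphism, and the unit, associativity and symmetry coherences are inherited verbatim from Theorem \ref{sub} and its Corollary, so no fresh coherence check is required.

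The point that deserves care — and that justifies the word \emph{partial} in the statement — is that this comparison is genuinely only partial even on set-coalgebras. Unwinding \eqref{def(1, 2)}, a pair of coalgebra maps $x_1\colon C\to\Bbbk S$, $x_2\colon C\to\Bbbk T$ lies in $(\Y(\Bbbk S)\times_{\cat}\Y(\Bbbk T))(C)$ exactly when the associated complete orthogonal idempotent systems in the convolution algebra $F(C)=C^*$ commute with one another; this is automatic when the test object $C$ is itself a set-coalgebra, because then $F(C)$ is commutative and the product recovers the classical Cartesian product on the nose, but it fails for non-cocommutative $C$. I expect the main work, beyond the grouplike argument for fullness, to be exactly this identification of the domain of the comparison together with the confirmation that the inherited axioms need no modification; the residual symmetry check — that the flip intertwines the isomorphisms $\Bbbk S\otimes\Bbbk T\cong\Bbbk(S\times T)$ and $\Bbbk T\otimes\Bbbk S\cong\Bbbk(T\times S)$ — is routine.
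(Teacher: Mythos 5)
Your proposal is correct and takes essentially the same route as the paper: full faithfulness of the linearization functor (your grouplike argument is the paper's computation in disguise, since identifying the grouplikes of $\Bbbk T$ with $T$ amounts to solving exactly the same system of idempotent equations over a field with no nontrivial idempotents), followed by composition with the fully faithful, symmetric partial-monoidal Yoneda embedding of Theorem \ref{sub} and its Corollary. Your additional remarks — the description of the domain of the comparison as pairs of convolution-commuting idempotent systems, and its totality on cocommutative test coalgebras — are correct but not required; the paper likewise treats the monoidal part as inherited from the two factors and leaves the symmetry check to the reader.
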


\begin{proof}
First, we observe that the linearization functor $\Bbbk -: \set\hookrightarrow\coalg$ is
symmetric strong monoidal when relating  the cartesian product of sets with the singleton  as a monoidal unit with the tensor product of coalgebras with a final coalgebra as a monoidal unit. To prove that it is fully faithful, we compute the set of morphisms $\coalg(\Bbbk S, \Bbbk T)$ to be a set of maps $S\rightarrow \Bbbk T$, $s\mapsto \sum_{t\in T}x_{t}(s)t$ where
\begin{align}\label{alphacomult}
\begin{split}
x_{t_1}(s)x_{t_2}(s) & =\delta_{t_1, t_2}x_{t_1}(s),\\
\sum _{t\in T}x_{t}(s) & = 1.
\end{split}
\end{align}
Since ${\rm char}(\Bbbk) = 0$ and $\Bbbk$ has no nontrivial idempotents, for every $s\in S$ there exists a unique $t_{0}\in T$ such that $x_{t}(s)=\delta_{t, t_0}$ and then such  $\alpha$ is a unique solution to (\ref{alphacomult}). Let us define $f: S\rightarrow T$ so that $f(s):=t_0$. This is the natural inverse to the map   $\set(S, T)\rightarrow\coalg(\Bbbk S, \Bbbk T)$, $f\mapsto (s\mapsto f(s)= \sum _{t\in T}\delta_{t, f(s)}.$ Since the Yoneda embedding is also fully faithful, the composition is fully faithful as well. Since the linearization functor is (strong) monoidal and the Yoneda functor is partial-monoidal,  their composition is partial-monoidal as well. Checking compatibility with symmetry is left to the reader.
\end{proof}
 
This partial-monoidal full embedding $\set\hookrightarrow q\set$ can be described as follows in terms of  a distinguished internal algebra $F$  in the topos  $q\set$ where $F(C):=\vect(C, \Bbbk)$ is the dual convolution algebra of the coalgebra $C$ being its predual.   Since for every set $S$ the set $F(\Bbbk S)=\set(S, \Bbbk)$ is the algebra of $\Bbbk$-valued functions on $S$ with the argument-wise multiplication and the constant 1 as a unit, we call $F$ the {\bf internal quantum function algebra}. Note that it is a purely algebraic analog of the von Neumann algebra, since a von Neumann C*-algebra is defined as one having the predual \cite{S-97}. Note also that, by the existence and the universal property of the  cofree coalgebra ${\rm C}(\Bbbk)$ on the vector space $\Bbbk$ \cite{S-69}, $F$ is representable by ${\rm C}(\Bbbk)$, i.e. $F=\Y({\rm C}(\Bbbk))$.

\begin{proposition}\label{partcomm}
The internal quantum function algebra $F$ is partial commutative,  i.e. the restricted multiplication 
\begin{align}\label{partcommrest}
F\times_{\cat}F\rightarrow F,
\end{align}
where $\cat$ is the partial monoidal full subcategory in $q\set$ consisting of representable presheaves, is commutative.
\end{proposition}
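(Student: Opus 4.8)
The plan is to exploit the representability $F=\Y({\rm C}(\Bbbk))$ in order to turn the abstract compatibility condition \eqref{def(1, 2)} defining $F\times_{\cat}F$ into an elementary scalar identity, by pushing it through the cogenerating map of the cofree coalgebra. Recall that the multiplication of $F$ is the convolution product: for $\varphi,\psi\in F(C)=\vect(C,\Bbbk)$ one has $(\varphi\ast\psi)(c)=\varphi(c_{(1)})\psi(c_{(2)})$. This product is defined on all of $F\times F$ and is in general \emph{not} commutative, precisely because $C$ need not be cocommutative; commutativity of the restricted product \eqref{partcommrest} therefore amounts to proving, for every $C$ and every compatible pair $(\varphi,\psi)\in(F\times_{\cat}F)(C)$, the identity $\varphi(c_{(1)})\psi(c_{(2)})=\psi(c_{(1)})\varphi(c_{(2)})$.

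Next I would make the identification $F=\Y({\rm C}(\Bbbk))$ explicit. By the universal property of the cofree coalgebra there is a cogenerating linear map $\pi:{\rm C}(\Bbbk)\to\Bbbk$ such that every $\varphi\in F(C)=\vect(C,\Bbbk)$ corresponds to the unique coalgebra map $\tilde\varphi:C\to{\rm C}(\Bbbk)$ with $\pi\circ\tilde\varphi=\varphi$. Applying the description \eqref{def(1, 2)} from Theorem \ref{sub} with $D_1=D_2={\rm C}(\Bbbk)$, the pair $(\varphi,\psi)$ lies in $(F\times_{\cat}F)(C)$ if and only if
$$\tilde\varphi(c_{(1)})\otimes\tilde\psi(c_{(2)})=\tilde\varphi(c_{(2)})\otimes\tilde\psi(c_{(1)})\in{\rm C}(\Bbbk)\otimes{\rm C}(\Bbbk).$$

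Finally I would apply the linear map $\pi\otimes\pi:{\rm C}(\Bbbk)\otimes{\rm C}(\Bbbk)\to\Bbbk\otimes\Bbbk\cong\Bbbk$ to both sides of this equality. Using $\pi\circ\tilde\varphi=\varphi$ and $\pi\circ\tilde\psi=\psi$ it collapses to the scalar identity $\varphi(c_{(1)})\psi(c_{(2)})=\varphi(c_{(2)})\psi(c_{(1)})$, and since scalars commute the right-hand side equals $\psi(c_{(1)})\varphi(c_{(2)})$. Hence $(\varphi\ast\psi)(c)=(\psi\ast\varphi)(c)$ for all $c$, i.e. $\varphi\ast\psi=\psi\ast\varphi$, which is exactly the asserted commutativity. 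The domain is moreover swap-stable — applying the flip on ${\rm C}(\Bbbk)\otimes{\rm C}(\Bbbk)$ shows $(\varphi,\psi)\in(F\times_{\cat}F)(C)$ iff $(\psi,\varphi)\in(F\times_{\cat}F)(C)$ — so the equality on this locus is well posed, and naturality in $C$ is automatic.

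The argument is short, so the only real obstacle is conceptual rather than computational: one must notice that the defining condition of $F\times_{\cat}F$ is phrased in terms of the coalgebra maps $\tilde\varphi,\tilde\psi$ landing in the large cofree coalgebra ${\rm C}(\Bbbk)$, not in terms of the functionals $\varphi,\psi$ themselves, so the passage back to a statement about the convolution product must go through the cogenerator $\pi$. It is essential here that $\pi\otimes\pi$ need only be a \emph{linear} map, not a coalgebra map; this is what lets us read off the scalar-level shadow of an equality that a priori lives in ${\rm C}(\Bbbk)\otimes{\rm C}(\Bbbk)$, and it is precisely the mechanism by which the failure of cocommutativity of $C$ is neutralized on the compatible locus.
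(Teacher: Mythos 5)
Your proof is correct and follows essentially the same route as the paper: both take the compatibility condition \eqref{def(1, 2)} for the pair of coalgebra maps into ${\rm C}(\Bbbk)$, apply the universal (cogenerating) linear map ${\rm C}(\Bbbk)\rightarrow\Bbbk$ in each tensor slot, and then invoke commutativity of $\Bbbk$ to conclude that the convolution product is symmetric on the compatible locus. Your explicit bookkeeping with $\tilde\varphi$ versus $\varphi$ and the remark on swap-stability of the domain merely make precise what the paper compresses into ``regarding now coalgebra maps $x_i$ as linear maps.''
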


\begin{proof}
Take $(x_1, x_2)\in (F\times_{\cat}F)(C)$. By (\ref{def(1, 2)}) and (\ref{1, 2}),  
\begin{align}\label{partcommrest}
x_1(c_{(1)})\otimes x_2(c_{(2)})
=x_1(c_{(2)})\otimes x_2(c_{(1)})
\in {\rm C}(\Bbbk)\otimes {\rm C}(\Bbbk).
\end{align}
Applying the universal linear map ${\rm C}(\Bbbk)\rightarrow \Bbbk$ to both slots of the tensor product on either side and then the commutative multiplication in $\Bbbk$ we get, regarding now coalgebra maps $x_i:C\rightarrow {\rm C}(\Bbbk)$ as linear maps $x_i:C\rightarrow \Bbbk$,
\begin{align}\label{comm}
x_1(c_{(1)}) x_2(c_{(2)})
=x_1(c_{(2)}) x_2(c_{(1)}) = x_2(c_{(1)}) x_1(c_{(2)})
\end{align}
in $\Bbbk$ which proves that in $F(C)$
\begin{align}\label{commF}
x_1 x_2 = x_2 x_1.
\end{align}
\end{proof}

\begin{proposition}\label{setqset}
The image of a set $T$ under the above embedding $\set\hookrightarrow q\set$  is the quantum complete set of orthogonal idempotents in $F$ indexed by $T$, i.e.
\begin{align}\label{ortidem}
\Y(\Bbbk T)(C)=
\Big\{ \{ x_t\}_{t\in T}\subset F(C)\ \Big|\  x_{t_1}x_{t_2} =\delta_{t_1, t_2}x_{t_1},\ \sum _{t\in T}x_{t} = 1\Big\}.
\end{align}
\end{proposition}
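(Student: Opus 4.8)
The plan is to unwind the definition $\Y(\Bbbk T)(C) = \coalg(C, \Bbbk T)$ and translate the two coalgebra-map axioms into conditions on coefficient functionals living in $F(C) = \vect(C, \Bbbk)$. Since $T$ is a basis of $\Bbbk T$ consisting of grouplike elements ($\Delta(t) = t \otimes t$, $\varepsilon(t) = 1$), any linear map $x : C \to \Bbbk T$ is determined by coordinate functionals $x_t \in F(C)$ via $x(c) = \sum_{t\in T} x_t(c)\, t$, the sum being finite for each $c$. I recall that the convolution product on $F(C)$ is $(fg)(c) = f(c_{(1)})g(c_{(2)})$ with unit the counit $\varepsilon_C = 1_{F(C)}$, so the whole proposition amounts to saying that $x$ is a coalgebra map precisely when the $x_t$ form a complete orthogonal system of idempotents.

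First I would impose compatibility with comultiplication. Expanding $\Delta_{\Bbbk T}(x(c)) = \sum_t x_t(c)\, (t\otimes t)$ and $(x\otimes x)\Delta_C(c) = \sum_{t_1,t_2} x_{t_1}(c_{(1)})x_{t_2}(c_{(2)})\,(t_1\otimes t_2)$ and comparing coefficients in the basis $\{t_1\otimes t_2\}$ of $\Bbbk T\otimes \Bbbk T$, the diagonal terms $t_1=t_2=t$ give $x_t(c) = x_t(c_{(1)})x_t(c_{(2)})$, i.e. $x_t = x_t^2$, while the off-diagonal terms $t_1\neq t_2$ give $x_{t_1}(c_{(1)})x_{t_2}(c_{(2)}) = 0$, i.e. $x_{t_1}x_{t_2}=0$. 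Together these read $x_{t_1}x_{t_2} = \delta_{t_1,t_2}\, x_{t_1}$ in $F(C)$.

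Next I would impose compatibility with the counit: applying $\varepsilon_{\Bbbk T}$ (which sends every $t$ to $1$) to $x(c)$ gives $\sum_t x_t(c)$, and this equals $\varepsilon_C(c)$ exactly when $\sum_{t\in T} x_t = \varepsilon_C = 1$ in $F(C)$. Conversely, reversing each of these coefficient comparisons shows that any family $\{x_t\}_{t\in T}$ of orthogonal idempotents summing to $1$ assembles into a well-defined coalgebra map $C\to \Bbbk T$, which establishes both inclusions and hence the claimed equality (naturally in $C$, as both sides are visibly functorial in $C$).

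There is no serious obstacle here; the computation is the discrete-category specialization of the earlier Proposition on categories with the finite decomposition property, with $\Bbbk T$ the coalgebra of the discrete category on $T$ and orthogonality arising exactly from the non-composability of distinct identity morphisms. The only points that require a little care are bookkeeping ones: using that distinct grouplikes $t$ are linearly independent, so that coefficient comparison in $\Bbbk T\otimes\Bbbk T$ is legitimate, and correctly identifying $1_{F(C)}$ with the counit $\varepsilon_C$, so that the counit axiom turns into the completeness relation $\sum_{t\in T} x_t = 1$.
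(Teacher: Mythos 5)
Your proof is correct and follows essentially the same route as the paper: the paper's proof simply says to repeat the coefficient computation of \eqref{alphacomult} with $\Bbbk S$ replaced by an arbitrary coalgebra $C$, which is precisely your expansion of a linear map $x(c)=\sum_{t}x_{t}(c)\,t$ in the grouplike basis and comparison of coefficients, turning comultiplication-compatibility into orthogonal idempotency under convolution and counit-compatibility into $\sum_{t}x_{t}=\varepsilon_{C}=1$. Your closing remark that this is the discrete-category case of the finite-decomposition proposition matches a remark the paper itself makes, so nothing further is needed.
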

\begin{proof}
The proof repeats the proof of (\ref{alphacomult}) with $\Bbbk S$ replaced by an arbitrary $C$.
\end{proof}
It does make sense to introduce the following notion of quantization. Given a set $T$, we call the quantum set  $qT:=\Y(\Bbbk T)$, the  {\bf quantization} of a set  $T$. Note that the evaluation of a quantum set on the final coalgebra $\Bbbk \{\bullet\}$ defines a functor in the opposite direction $q\set\rightarrow \set$ which can be ragarded as a retraction of a quantum set to its classical part, since $qT(\Bbbk \{\bullet\})=T$.

 \subsubsection{Birkhoff--von Neumann's quantum propositional calculus}
\begin{corollary}\label{truth}
The quantization $q\Omega$ of the subobject classifier $\Omega=\{ \bm{\bot,} \bm{\top} \}$ (a.k.a. the set of truth values) in the topos $\set$ under the above embedding $\set\hookrightarrow q\set$ is the quantum set of idempotents in the internal quantum function algebra $F=q\Bbbk$ in $q\set$, i.e.
\begin{align}\label{idem}
\Y(\Bbbk \Omega \})(C)=
\big\{ x\in F(C)\ \big|\    x^{2} =x\big\}.
\end{align}
\end{corollary}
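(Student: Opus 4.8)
The plan is to read the statement off directly from Proposition~\ref{setqset}, specialising it to the two-element set $T=\Omega=\{\bot,\top\}$; indeed, since the result is stated as a corollary, essentially all of its content is this specialisation together with one bookkeeping observation. Proposition~\ref{setqset} identifies $\Y(\Bbbk T)(C)$ with the set of complete systems of orthogonal idempotents in $F(C)$ indexed by $T$. For $|T|=2$ such a system is an ordered pair $(x_{\bot},x_{\top})$ of elements of $F(C)$ subject to $x_{\bot}^{2}=x_{\bot}$, $x_{\top}^{2}=x_{\top}$, $x_{\bot}x_{\top}=x_{\top}x_{\bot}=0$ and $x_{\bot}+x_{\top}=1$. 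The whole point is that this redundant datum collapses to a single idempotent once the completeness relation $x_{\bot}+x_{\top}=1$ is taken into account.

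First I would fix the assignment $(x_{\bot},x_{\top})\mapsto x_{\top}$ with values in $\{x\in F(C)\mid x^{2}=x\}$, which is well defined because $x_{\top}^{2}=x_{\top}$ is already one of the defining relations, and which matches the Birkhoff--von Neumann reading that identifies a proposition with the idempotent projecting onto its ``true'' part. For the inverse I would send an idempotent $x$ to the pair $(1-x,\,x)$, where $1=\varepsilon_{C}$ is the unit of the convolution algebra $F(C)=\vect(C,\Bbbk)$. Then, using only $x^{2}=x$, I would verify all the relations: $(1-x)^{2}=1-2x+x^{2}=1-x$, $(1-x)x=x-x^{2}=0$, $x(1-x)=x-x^{2}=0$, and $(1-x)+x=1$. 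The two assignments are mutually inverse, since $x_{\bot}=1-x_{\top}$ is forced by completeness, so they set up the claimed bijection between $\Y(\Bbbk\Omega)(C)$ and the idempotents of $F(C)$.

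There is no genuine obstacle here; the only point worth flagging is that, although $F$ is merely \emph{partial} commutative by Proposition~\ref{partcomm}, the argument never invokes commutativity of $F(C)$: the orthogonality relation is imposed on both sides, and in the inverse direction the two products $(1-x)x$ and $x(1-x)$ are each computed separately and each equals $x-x^{2}=0$. Finally, to promote the pointwise bijection to an isomorphism of quantum sets I would check naturality in $C$: projecting a coalgebra map $C\to\Bbbk\Omega$ onto its $\top$-coefficient, and conversely forming $(1-x,x)$, both commute with restriction along any coalgebra map $\phi\colon C'\to C$, which acts by precomposition $x_{\top}\mapsto x_{\top}\circ\phi$ on both sides and respects the unit $\varepsilon$. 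This naturality is the only ingredient beyond a one-line computation, and it is immediate from the descriptions of $F$ and of $\Y(\Bbbk\Omega)$ as presheaves.
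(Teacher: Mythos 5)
Your proposal is correct and follows essentially the same route as the paper's proof: both specialise Proposition~\ref{setqset} to $T=\Omega=\{\bot,\top\}$ and observe that the complete orthogonal system $(x_{\bot},x_{\top})$ is determined by the single idempotent $x:=x_{\top}$ via $x_{\bot}=1-x_{\top}$. The paper states this collapse in one line, while you additionally spell out the explicit inverse $x\mapsto(1-x,x)$, the verification of the relations, and naturality in $C$ — useful detail, but the same argument.
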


\begin{proof} By Proposition \ref{setqset}, the right hand side of (\ref{ortidem}) consists now of two elements $x_{\bot}, x_{\top}\in F(C)$ satisfying
\begin{align}
x_{\bot}^2=x_{\bot},\  x_{\bot}x_{\top}=x_{\top}x_{\bot}=0,\  x_{\top}^2=x_{\top},\  x_{\bot}+x_{\top}=1,
\end{align}
and hence it is uniquely determined by $x:=x_{\top}$ satisfying $x^2=x$.
\end{proof}

Note that $\Omega$ is a Boole algebra. The linearization functor applied to Boolean operations $\wedge$, $\vee$ transforms them into coalgebra maps, and after applying the Yoneda embedding, a partial distributive lattice structure under the induced operations
\begin{align}
\Y(\Bbbk \Omega )\times_{\cat}\Y(\Bbbk \Omega )\rightarrow \Y(\Bbbk \Omega \otimes\Bbbk\Omega )\rightarrow  \Y(\Bbbk (\Omega \times\Omega))\rightarrow  \Y(\Bbbk \Omega).
\end{align}

This realizes the Birkhoff-von Neumann idea of {\bf  quantum propositions} as projections in a von Neumann algebra. In our formalism however, it is not enough to have them commuting before one aplies the binary Boolean operations $\wedge$ and $\vee$, we need them first to form an admissible pair $(x_1, x_2)\in (\Y(\Bbbk \Omega )\times_{\cat}\Y(\Bbbk \Omega ))(C)$ before we multiply them in the von Neumann-like partial commutative internal quantum function algebra $F$.

\subsubsection{Quantum states} The quantum propositional calculus is not enough in physical applications. When one interpretes elements of $F$ as quantum observables, the important question about their expected values in a given state  arises. Ignoring for the time being positivity part of the story, we define states as follows.
\begin{definition}
The  {\bf set of states} ${\rm St}(C)$ of a given coalgebra $C$ is an affine space of elements $c\in C$ satisfying $\varepsilon(c)=1$. We will call the affine space $\langle-\rangle(C)$ of affine functions ${\rm St}(C)\rightarrow \Bbbk$ the {\bf space of expectations} on $C$. It is easy to see that they form a presheaf of affine spaces on $\coalg$, and hence  an internal affine space in $q\set$. Moreover, there is a tautological linear map of internal affine spaces in  $q\set$ (here we suppress the forgetful functor from algebras to affine spaces)
\begin{align}
F\rightarrow \langle-\rangle,
\end{align}
defined as a natural transformation of presheaves, for $x\in F(C)$ given as 
\begin{align}
(C\ni c\mapsto x(c)\in \Bbbk)\mapsto ({\rm St}(C)\ni c\mapsto x(c)\in \Bbbk)
\end{align}
which we call  {\bf expectation} on states of $C$. Its evaluation at a given state  will be called the  {\bf expected value} of a given observable $x\in F(C)$ in the state $c\in {\rm St}(C)$, and denoted by $ \langle x\rangle_{c}$.
\end{definition}

 In particular, an evaluation of an idempotent element of $F(C)$ at a given state  $c\in {\rm St}(C)$ is a  purely algebraic counterpart of the notion of  {\bf probability} of a {\bf quantum proposition} at a given {\bf state} on the von Neumann algebra. In the internal language of  $q\set$ it is a composition
\begin{align}
\Y(\Bbbk \Omega )\longrightarrow F \longrightarrow \langle-\rangle
\end{align}
transforming the {\bf truth value} into its {\bf probability} as follows $\top\mapsto  \langle x_{\top}\rangle_{c}$.

\subsubsection{Positivity}

In modelling the probabilistic quantum-physical measurement we have to restrict the above constructions to the category of $*$-coalgebras over the field of complex numbers $\Bbbk=\C$, i.e. coalgebras equipped with an antilinear, involutive, the comultiplication flipping and the counit preserving  map $(-)^{*}: C\rightarrow C $ as objects  and $*$-preserving coalgebra maps as morphisms. To the previous  purely algebraic condition $\varepsilon(c)=1$ on $c\in C$ to be a state, we have to add the positivity condition $\Delta(c)=\sum_{i}c^{*}_{i}\otimes c_{i}$. Note that such states form a convex subset of the above complex affine space of purely algebraic states, since for every $t\in[0, 1]$
\begin{align}
\begin{split}
 & \Delta((1-t)c_{0}+tc_{1})=(1-t)\Delta(c_{0})+t\Delta(c_{1})
=\  (1-t)\sum_{i}c^{*}_{0, i}\otimes c_{0, i}+t\sum_{i}c^{*}_{1, j}\otimes c_{1, j}\\
=\ &  \sum_{i}(\sqrt{1-t}\ c_{0, i})^{*}\otimes (\sqrt{1-t}\ c_{0, i})+\sum_{j}(\sqrt{t}\ c_{1, j})^{*}\otimes (\sqrt{t}\ c_{1, j}).
\end{split}
\end{align}

 Then the induced antilinear, involutive, the multiplication reversing and the unit preserving map $(-)^{*}: F(C)\rightarrow F(C)$, $x^{*}(c):=\overline{x(c^{*})}$ makes $F(C)$ a $*$-algebra. A state $c$ becomes now a positive functional on $F(C)$, as it follows from the following calculation.
\begin{align}\label{posit}
\begin{split}
&\langle x^{*}x\rangle_{c}=(x^{*}x)(c)=x^{*}(c_{(1)})x(c_{(2)})=\sum_{i}x^{*}(c^{*}_{i})x(c_{i})\\
=\ & \sum_{i}\overline{x(c^{**}_{i})}x(c_{i})=\sum_{i}\overline{x(c_{i})}x(c_{i})=\sum_{i}\big|x(c_{i}\big|^{2}\geq 0.
\end{split}
\end {align}

When on the linearization $\C\{ \bot, \top\}$ we define $*$ in the way that $\bot^*=\bot$, $\top^*=\top$, we obtain $x_{\bot}^{*}=x_{\bot}$,  $x_{\top}^{*}=x_{\top}$. Therefore, by \eqref{posit},
\begin{align}
\begin{split}
&\langle x_{\top}\rangle_{c}=\langle x_{\top}^2\rangle_{c}=\langle x_{\top}^{*}x_{\top}\rangle_{c}\geq 0.
\end{split}
\end {align}
The same holds for $x_{\bot}=1-x_{\top}$, hence by $\langle 1\rangle_{c}=\varepsilon(c)=1$ we have $\langle x_{\top}\rangle_{c}\in [0, 1]$. Therefore $\langle x_{\top}\rangle_{c}$ can be interpreted as probability of the proposition $x_{\top}$ in the state $c$.

    

\subsubsection{Associative algebras as quantum algebras}
The construction of the internal partial commutative algebra $F$ can be regarded as a base quantum algebra obtained by quantizing  the base field $\Bbbk$, , i.e. $F=q \Bbbk$.. Let us explain it and show that it extends naturally to quantization of arbitrary $\Bbbk$-algebras.

\begin{definition}\label{qAlg}
    For every unital associative $\Bbbk$-algebra $A$ its  Sweedler cofree coalgebra ${\rm C}(A)$ \cite{S-69}  represents a  presheaf $qA:=\Y({\rm C}(A))$ on $\coalg$ 
       $${C} \mapsto \coalg(C,{\rm C}(A)) = \vect(C, A)$$
    of algebras (with convolution multiplication), regarded as  free algebras  for a monad $\vect(C, -)$ on the category of $\Bbbk$-algebras \cite{ML-71}.  We call such a presheaf object {\bf quantization} of the algebra $A$. In this way we obtain a presheaf $q\alg$ of categories on $\coalg$ where 
\begin{align}
q\alg\big(qA', qA\big) (C) :=\ &  \alg^{\vect(C, -)}(\vect(C, A'), \vect(C, A)).
\end{align}
\end{definition} 

\begin{remark}Note that the term {\bf deformation quantization} splits then into two stages; first, the quantization of a (typically commutative) algebra in our sense, and then a deformation of the thus obtained quantum algebra.
\end{remark}

 One could also note that, despite we use the same prefix $q$ for quantization of sets and algebras, the quantization of an algebra differs from the quantization of its underlying set. The appropriate relation between quantum algebras and quantum sets is provided by the following proposition where ${\rm Meas}( A', A)$ denotes Sweedler's universal measuring coalgebra \cite{S-69}.

\begin{proposition}
The category $q\alg$   of quantum algebras provides a canonical  enrichment of the category $\alg$ of algebras in the category of quantum sets $q\set$ as follows
$$q\alg\big(qA', qA\big) =  \Y\big({\rm Meas}( A', A)\big).$$
\end{proposition}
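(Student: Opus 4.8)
The plan is to verify the formula $C$-point by $C$-point, then check naturality, and finally transport the enrichment structure. Fix algebras $A'$, $A$ and a coalgebra $C$. By definition $q\alg(qA',qA)(C)$ is the set of morphisms between the free algebras $\vect(C,A')$ and $\vect(C,A)$ for the monad $T_C:=\vect(C,-)$ on $\alg$, whose multiplication uses the comultiplication of $C$ and whose unit uses its counit. The free--forgetful adjunction for $T_C$ identifies morphisms out of a free algebra with structure maps out of the generator, so that
$$ q\alg(qA',qA)(C)=\alg^{T_C}\big(\vect(C,A'),\vect(C,A)\big)\;\cong\;\alg\big(A',\,\vect(C,A)\big), $$
the set of unital $\Bbbk$-algebra homomorphisms from $A'$ into the convolution algebra $\vect(C,A)$.

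Next I would invoke the universal property of Sweedler's universal measuring coalgebra: $\coalg(C,{\rm Meas}(A',A))$ is naturally the set of $C$-measurings of $A'$ to $A$, i.e.\ linear maps $\psi\colon C\otimes A'\to A$ with $\psi(c\otimes a'b')=\psi(c_{(1)}\otimes a')\psi(c_{(2)}\otimes b')$ and $\psi(c\otimes 1_{A'})=\varepsilon(c)1_A$. The heart of the matter is the elementary but essential observation that currying $\hat\psi(a')(c):=\psi(c\otimes a')$ gives a bijection between such measurings and the algebra homomorphisms of the previous step: the two measuring axioms say precisely that $\hat\psi$ is multiplicative for the convolution product $(f*g)(c)=f(c_{(1)})g(c_{(2)})$ and sends $1_{A'}$ to the convolution unit $c\mapsto\varepsilon(c)1_A$. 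Composing the three bijections yields, for each $C$,
$$ q\alg(qA',qA)(C)\;\cong\;\alg\big(A',\vect(C,A)\big)\;\cong\;\{C\text{-measurings}\}\;\cong\;\coalg\big(C,{\rm Meas}(A',A)\big)=\Y\big({\rm Meas}(A',A)\big)(C). $$

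To upgrade this family of bijections to an isomorphism of presheaves, that is of objects of $q\set$, I would check naturality in $C$: a coalgebra map $\pi\colon C'\to C$ acts on the algebra side by restriction along the induced convolution-algebra map $\vect(\pi,A)\colon\vect(C,A)\to\vect(C',A)$ and on the Yoneda side by precomposition $\pi^{*}$ in $\coalg$, and the two agree under the currying isomorphism by direct inspection. This establishes the displayed equality of hom-objects in $q\set$.

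Finally, for the word \emph{enrichment} I would transport Sweedler's $\coalg$-enriched structure on $\alg$ through the Yoneda functor. Composition of measurings is classically encoded by coalgebra maps ${\rm Meas}(A',A)\otimes{\rm Meas}(A'',A')\to{\rm Meas}(A'',A)$ and the identity by a grouplike $\Bbbk\{\bullet\}\to{\rm Meas}(A,A)$; applying the symmetric partial-monoidal functor $\Y$ of the Corollary above turns $\otimes$ into the partial product $\times_{\cat}$ and supplies the composition and unit morphisms of the enrichment in $q\set$, with composition defined on the partial product $\Y({\rm Meas}(A',A))\times_{\cat}\Y({\rm Meas}(A'',A'))$ as befits the rest of the paper. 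The anticipated main obstacle is exactly this step: confirming that Kleisli composition of free-algebra morphisms matches Sweedler's composition of measurings levelwise, and reconciling the genuine monoidal composition available in $\coalg$ with its merely partial-monoidal image under $\Y$, so that the associativity and unit laws hold on the appropriate partial products. As a sanity check I would note that $\alg(\Bbbk[x],\vect(C,A))=\vect(C,A)=\coalg(C,{\rm C}(A))$ forces ${\rm Meas}(\Bbbk[x],A)\cong{\rm C}(A)$, so the underlying quantum set of $qA$ is recovered as $q\alg(q\Bbbk[x],qA)$, consistent with $\Bbbk[x]$ representing the forgetful functor $\alg\to\set$.
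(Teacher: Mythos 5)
Your proof is correct and follows essentially the same route as the paper's: the definition of $q\alg$, the free--forgetful (Kleisli) adjunction identifying morphisms out of a free $\vect(C,-)$-algebra with algebra maps $A'\rightarrow\vect(C,A)$, the universal property of Sweedler's measuring coalgebra ${\rm Meas}(A',A)$, and the Yoneda identification. The paper records this as a bare chain of equalities, so your extra steps (the explicit currying bijection with measurings, the naturality check in $C$, and the transport of composition along the partial-monoidal Yoneda functor) are elaborations of the same argument rather than a different approach.
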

\begin{proof}
By Definition \ref{qAlg} and  the existence of the universal measuring coalgebra \cite{S-69} we get
\begin{align}
   & q\alg\big(qA', qA\big) (C) \\
:=\ &  \alg^{\vect(C, -)}(\vect(C, A'), \vect(C, A)) \\
=\ &  \alg( A', \vect(C, A)) \\
=\ &  \coalg\big(C, {\rm Meas}( A', A)\big)\\
=\ &  \Y\big({\rm Meas}( A', A)\big)(C)
\end{align}
where the second equality follows from the equivalence between  the full subcategory of free algebras in the Eilenberg-Moore category and the Kleisli category \cite{K-65}\cite{ML-71}. 
\end{proof}

Lastly, since the same arguments as in the proof of Proposition \ref{partcomm} can be applied to any commutative $\Bbbk$-algebra instead of $\Bbbk$ itself, we get the following proposition.

\begin{proposition}
If an algebra $A$ is commutative, $qA$ is partial commutative, i.e. the quantum partial operation
$$ qA\times_{\cat} qA\longrightarrow qA,$$
induced by multiplication in $A$ and the symmetric monoidal structure of the Yoneda embedding, is commutative.
\end{proposition}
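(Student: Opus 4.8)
The plan is to mimic, essentially verbatim, the proof of Proposition \ref{partcomm}, replacing the base field $\Bbbk$ throughout by the commutative algebra $A$ and the multiplication of $\Bbbk$ by that of $A$. First I would take an admissible pair $(x_1, x_2) \in (qA \times_{\cat} qA)(C)$. Since $qA = \Y({\rm C}(A))$, so that here $D_1 = D_2 = {\rm C}(A)$, the defining condition \eqref{def(1, 2)} together with \eqref{1, 2} yields the symmetry identity
$$x_1(c_{(1)}) \otimes x_2(c_{(2)}) = x_1(c_{(2)}) \otimes x_2(c_{(1)}) \in {\rm C}(A) \otimes {\rm C}(A).$$

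Next I would apply the universal linear map ${\rm C}(A) \rightarrow A$ — the counit of the cofree-coalgebra adjunction implementing the identification $\coalg(C, {\rm C}(A)) = \vect(C, A)$ of Definition \ref{qAlg} — to both tensor slots on each side. Regarding the coalgebra maps $x_i : C \rightarrow {\rm C}(A)$ as their underlying linear maps $x_i : C \rightarrow A$, this transports the identity into $A \otimes A$, and postcomposing with the multiplication $A \otimes A \rightarrow A$ gives
$$x_1(c_{(1)})\, x_2(c_{(2)}) = x_1(c_{(2)})\, x_2(c_{(1)}) = x_2(c_{(1)})\, x_1(c_{(2)}) \in A,$$
where the crucial middle equality is precisely where the commutativity of $A$ is consumed. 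Recognizing the outer two expressions as the values at $c$ of the convolution products $x_1 x_2$ and $x_2 x_1$ in $qA(C) = \vect(C, A)$, I conclude that $x_1 x_2 = x_2 x_1$, i.e. the restricted multiplication $qA \times_{\cat} qA \rightarrow qA$ is commutative, as claimed.

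I do not expect any genuine obstacle, since the argument is structurally identical to that of Proposition \ref{partcomm}; the hypothesis on $A$ enters at exactly one step, the passage $x_1(c_{(2)}) x_2(c_{(1)}) = x_2(c_{(1)}) x_1(c_{(2)})$. The one point deserving a line of care is to confirm that postcomposition with the universal linear map ${\rm C}(A) \rightarrow A$ intertwines the argument-wise convolution on $qA(C)$ with the multiplication of $A$ in the manner used above — that is, that the product in $qA(C)$ is computed by $c \mapsto x_1(c_{(1)})\, x_2(c_{(2)})$ with the product taken in $A$ — which is immediate from the convolution algebra structure on $qA(C) = \vect(C, A)$ recalled in Definition \ref{qAlg}.
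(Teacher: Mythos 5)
Your proposal is correct and is exactly the paper's own argument: the paper proves this proposition simply by remarking that the proof of Proposition \ref{partcomm} goes through verbatim with the commutative algebra $A$ in place of $\Bbbk$, which is precisely what you have spelled out (admissible pair, the symmetry identity from \eqref{def(1, 2)} and \eqref{1, 2}, applying the universal map ${\rm C}(A)\rightarrow A$ to both tensor slots, and multiplying in the commutative algebra $A$ to conclude commutativity of the convolution product in $\vect(C,A)$). Your closing remark about the universal map intertwining the cofree-coalgebra structure with the convolution product is the right point of care and matches how the paper identifies $qA(C)$ with the convolution algebra $\vect(C,A)$ in Definition \ref{qAlg}.
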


\subsection{Quantum Universal Algebra}
Motivated by the above case of associative algebras, below we will consider representable quantum sets, represented by objects of any algebraic kind compatible with its coalgebra structure. The above partial symmetric monoidal structure of the Yoneda embedding will make them presheaves of partial algebras of that kind.
\begin{definition}
    A {\bf quantum operation} of arity $n \in \N$ on a quantum set represented by a coalgebra $D$ is a coalgebra map 
    $$\omega:D^{\otimes n} \rightarrow D.$$
\end{definition}
Note that quantum operations together with the above partial monoidal structure of the Yoneda embedding induce following partial operations on representable quantum sets, defined as compositions
$$\Y(D)\times_{\cat}\cdots \times_{\cat} \Y(D)\longrightarrow \Y(D\otimes \cdots \otimes D) \longrightarrow\Y(D).$$
\begin{definition}
    A {\bf quantum identity} is a commutative diagram of morphisms of representable quantum sets induced by quantum operations and structural morphism of the symmetric monoidal category $\coalg$.  
\end{definition}

\begin{example}[Binary operations]
Every {\bf binary operation} $$\circ : B\otimes B \rightarrow B,\quad  b \otimes b' \mapsto b \circ b'$$
allows us to define a {\bf partial binary operation} on $\Y (B)$ as a partially defined composition 
$$(\Y (B)\times_{\cat} \Y (B))(C) \rightarrow \Y(B\otimes B)(C) \rightarrow \Y(B)(C)$$
$$(\beta, \beta') \mapsto \big(c \mapsto \beta(c_{(1)}) \otimes \beta'(c_{(2)})\big) \mapsto  \beta\circ \beta':= \big(c \mapsto \beta(c_{(1)}) \circ \beta'(c_{(2)})\big).$$
\end{example}

\subsubsection{Quantum Boolean Algebras}
The aim of extending the notion of Boolean algebra to its quantum counterpart is to provide an algebraic semantics \cite{Bl-Pi-89} for quantum propositional calculus beyond the Birkhoff--von Neumann propositional calculus, by allowing quantum truth values. Postponing for a while the question of the negation in a  Boolean algebra, we start with bounded distributive lattices.

\begin{definition}
A representable  quantum {\bf bounded distributive lattice} is a quantum set represented by a coalgebra $B$  equipped with coalgebra maps
$$\Bbbk\{\bullet\} \stackrel{\bm{\bot}}{\longrightarrow} B,
\qquad \Bbbk\{\bullet\} \stackrel{\bm{\top}}{\longrightarrow} B,
\qquad B \otimes_\Bbbk B \stackrel{\bm{\wedge}}{\longrightarrow} B,
\qquad B \otimes_\Bbbk B \stackrel{\bm{\vee}}{\longrightarrow} B$$
satisfying commutativity of the following diagrams in $\coalg$, where $\tau$ denotes the transposition of tensor factors.

\emph{Associativity}
\begin{center}
\begin{tikzcd}
B \otimes_\Bbbk B\otimes_\Bbbk B \arrow[dd, "\bm\wedge\otimes_\Bbbk B"'] \arrow[rr, "B \otimes_\Bbbk \bm\wedge"] &  & B \otimes_\Bbbk B \arrow[dd, "\bm\wedge"] \\
&  &\\
B\otimes_\Bbbk B \arrow[rr, "\bm\wedge"]&  & B                                      
\end{tikzcd}\hspace{2em}
\begin{tikzcd}
B \otimes_\Bbbk B\otimes_\Bbbk B \arrow[dd, "\bm\vee\otimes_\Bbbk B"'] \arrow[rr, "B \otimes_\Bbbk \bm\vee"] &  & B \otimes_\Bbbk B \arrow[dd, "\bm\vee"] \\
&  &\\
B\otimes_\Bbbk B \arrow[rr, "\bm\vee"]&  & B                                      
\end{tikzcd}
\end{center}

\emph{Identity}
\begin{center}
\hspace{4.5em}
\begin{tikzcd}
B \otimes_\Bbbk B \arrow[r, "\bm\wedge"]& B \\ &  &   \\
B \otimes_\Bbbk \Bbbk \{\bullet\} \arrow[ruu,"\bm\cong"'] \arrow[uu, "B \otimes_\Bbbk \bm\top"] &
\end{tikzcd}\hspace{6em}
\begin{tikzcd}
B \otimes_\Bbbk B \arrow[r, "\bm\vee"]& B \\ &  &   \\
B \otimes_\Bbbk \Bbbk \{\bullet\} \arrow[ruu,"\bm\cong"'] \arrow[uu, "B \otimes_\Bbbk \bm\bot"] &
\end{tikzcd}
\end{center}

\emph{Commutativity}

\begin{center}
\begin{tikzcd}
B\otimes_\Bbbk B \arrow[rr, "{\tau}"] \arrow[rd, "\bm\vee"'] &   & B\otimes_\Bbbk B \arrow[ld, "\bm\vee"] \\
& B &
\end{tikzcd} \hspace{4em}
\begin{tikzcd}
B\otimes_\Bbbk B \arrow[rr, "{\tau}"] \arrow[rd, "\bm\wedge"'] &   & B\otimes_\Bbbk B \arrow[ld, "\bm\wedge"] \\
& B &
\end{tikzcd}
\end{center}

\newpage
\emph{Absorbtion}

\hspace{2.88em}
\begin{tikzcd}
B \otimes_\Bbbk B \otimes_\Bbbk B \arrow[r, "B \otimes_\Bbbk \bm\vee"]& B \otimes_\Bbbk B \arrow[dd, "\bm\wedge"] \\
&\\
B \otimes_\Bbbk B\arrow[uu, "\Delta \otimes_\Bbbk B"] \arrow[r, "B\otimes_\Bbbk \varepsilon"] & B
\end{tikzcd}\hspace{4em}
\begin{tikzcd}
B \otimes_\Bbbk B \otimes_\Bbbk B \arrow[r, "B \otimes_\Bbbk \bm\wedge"]& B \otimes_\Bbbk B \arrow[dd, "\bm\vee"] \\
&\\
B \otimes_\Bbbk B\arrow[uu, "\Delta \otimes_\Bbbk B"] \arrow[r, "B\otimes_\Bbbk \varepsilon"] & B
\end{tikzcd}

\emph{Distributivity}

\begin{center}
\hspace{-3em}
\begin{tikzcd}
B\otimes_\Bbbk B\otimes_\Bbbk B \arrow[rr, "B \otimes _\Bbbk \bm \wedge"] \arrow[dd, "\Delta  \otimes _\Bbbk B  \otimes _\Bbbk B"'] &  & B \otimes _\Bbbk B \arrow[dddddd, "\bm\vee"] \\
&  &\\
B\otimes_\Bbbk  B\otimes_\Bbbk B\otimes_\Bbbk B \arrow[dd, "B  \otimes _\Bbbk \tau  \otimes _\Bbbk B"']&  &\\
&  &\\
B\otimes_\Bbbk B\otimes_\Bbbk B \otimes_\Bbbk B \arrow[dd, "\bm\vee \otimes_\Bbbk \bm \vee"']&  &\\
&  &\\
B\otimes_\Bbbk B \arrow[rr, "\bm \wedge"]&  & B
\end{tikzcd}
\begin{tikzcd}
B\otimes_\Bbbk B\otimes_\Bbbk B \arrow[rr, "B \otimes _\Bbbk \bm \vee"] \arrow[dd, "\Delta  \otimes _\Bbbk B  \otimes _\Bbbk B"'] &  & B \otimes _\Bbbk B \arrow[dddddd, "\bm\wedge"] \\
&  &\\
B\otimes_\Bbbk B\otimes_\Bbbk B\otimes_\Bbbk B \arrow[dd, "B  \otimes _\Bbbk \tau  \otimes _\Bbbk B"']&  &\\
&  &\\
B\otimes_\Bbbk B\otimes_\Bbbk B\otimes_\Bbbk B \arrow[dd, "\bm\wedge \otimes_\Bbbk \bm \wedge"']&  &\\
&  &\\
B\otimes_\Bbbk B \arrow[rr, "\bm \vee"]&  & B
\end{tikzcd}
\end{center}
$$b\vee(b'\wedge b'') = (b_{(1)} \vee b') \wedge( b_{(2)} \vee b''), \quad 
\qquad b\wedge(b'\vee b'') = (b_{(1)} \wedge b') \vee( b_{(2)} \wedge b'')$$

\begin{definition}
We say, that a representable quantum bounded distributive lattice is a representable {\bf quantum Boolean algebra} if there exists an involutive coalgebra map $\bm\neg: B\rightarrow B^{o}$ making the following diagrams in $\vect$ commute. 
\end{definition}
\begin{center}
\hspace{-0.5em}
\begin{tikzcd}
B \otimes_\Bbbk B \arrow[rr, "B \otimes_\Bbbk \bm\neg"]&& B \otimes_\Bbbk B \arrow[dd, "\bm\vee"]   \\
&&\\
B \arrow[uu, "\Delta"] \arrow[r, "\varepsilon"] \arrow[dd, "\Delta"'] & \Bbbk \{ \bullet \} \arrow[r, "\bm \top"] & B\\
&&\\
B \otimes_\Bbbk B \arrow[rr, "\bm\neg \otimes _\Bbbk B"']&& B \otimes_\Bbbk B \arrow[uu, "\bm \vee"']
\end{tikzcd}\hspace{3.75em}
\begin{tikzcd}
B \otimes_\Bbbk B \arrow[rr, "B \otimes_\Bbbk \bm\neg"]&& B \otimes_\Bbbk B \arrow[dd, "\bm\wedge"]   \\
&&\\
B \arrow[uu, "\Delta"] \arrow[r, "\varepsilon"] \arrow[dd, "\Delta"'] & \Bbbk \{ \bullet \} \arrow[r, "\bm \bot"] & B\\
&&\\
B \otimes_\Bbbk B \arrow[rr, "\bm\neg \otimes _\Bbbk B"']&& B \otimes_\Bbbk B \arrow[uu, "\bm \wedge"']
\end{tikzcd}
\end{center}
 We will refer to it as \emph{Complement} axiom. It reads as the following system of identities
$$ (\neg(b_{(1)})) \vee b_{(2)}
= \varepsilon(b) \top, \qquad (\neg(b_{(1)})) \wedge b_{(2)} = \varepsilon(b) \bot ,$$

$$ b_{(1)} \vee\neg(b_{(2)})= \varepsilon(b) \top, \qquad b_{(1)} \wedge \neg(b_{(2)}) = \varepsilon(b) \bot .$$
where for simplicity of notation we use the following conventions:
\begin{align*}
b  \vee b' &:= \bm \vee (b \otimes_\Bbbk b')  &b  \wedge b' &:= \bm \wedge (b \otimes_\Bbbk b')  \\
\bot &:= \bm\bot(\bullet) &\top &:= \bm\top(\bullet)
\end{align*}
\end{definition}

The following proposition is the sanity check for these axioms, saying that they restrict to classical ones for classical Boolean algebras.

\begin{proposition}
The linearization $\Bbbk B$  of a Boolean algebra $B$ represents a quantum Boolean algebra.
\end{proposition}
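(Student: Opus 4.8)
The plan is to transport the entire classical Boolean structure of $B$ through the linearization functor and to observe that every axiom of a representable quantum Boolean algebra collapses to its classical counterpart once one evaluates on group-like elements. First I would record the coalgebra structure on $\Bbbk B$: each $b\in B$ is group-like, so $\Delta(b)=b\otimes b$ and $\varepsilon(b)=1$. The classical bounds $\bot,\top\colon\{\bullet\}\to B$, the meet and join $\wedge,\vee\colon B\times B\to B$, and the negation $\neg\colon B\to B$ are maps of sets, so their images under $\Bbbk-$ are coalgebra maps. Invoking the strong monoidality of $\Bbbk-$ established in Subsection~\ref{set}, namely the natural isomorphism $\Bbbk(B\times B)\cong\Bbbk B\otimes_\Bbbk\Bbbk B$, the linearized operations become coalgebra maps $\bm\wedge,\bm\vee\colon\Bbbk B\otimes_\Bbbk\Bbbk B\to\Bbbk B$, while $\bm\bot,\bm\top$ become coalgebra maps $\Bbbk\{\bullet\}\to\Bbbk B$ and $\bm\neg$ a coalgebra endomorphism. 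This already supplies all structure maps required by the two preceding definitions.

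Next I would note that the group-likes $\{b\}_{b\in B}$ form a $\Bbbk$-basis of $\Bbbk B$, and since every axiom is an equality of $\Bbbk$-linear (indeed coalgebra) maps, it suffices to check each commuting diagram on basis elements. On a group-like element the Heyneman--Sweedler symbol degenerates to $b_{(1)}\otimes b_{(2)}=b\otimes b$ with $\varepsilon(b)=1$, so each quantum identity reduces verbatim to a classical one. Concretely: the associativity, commutativity and identity diagrams contain no comultiplication and hold because $\wedge,\vee$ are associative, commutative and unital (with units $\top,\bot$) in $B$; the absorption diagrams, after feeding $b\otimes b'$ through $\Delta\otimes_\Bbbk B$, read $b\wedge(b\vee b')=b$ and $b\vee(b\wedge b')=b$; the distributivity identity $b\vee(b'\wedge b'')=(b_{(1)}\vee b')\wedge(b_{(2)}\vee b'')$ and its dual become $(b\vee b')\wedge(b\vee b'')$ and $(b\wedge b')\vee(b\wedge b'')$ once $b_{(1)}\otimes b_{(2)}$ is replaced by $b\otimes b$; and the Complement identities $\neg(b_{(1)})\vee b_{(2)}=\varepsilon(b)\top$ together with their three symmetric variants reduce to $\neg b\vee b=\top$ and $\neg b\wedge b=\bot$. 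Involutivity of $\bm\neg$ follows from $\neg\neg=\mathrm{id}$ in $B$, and since the comultiplication of a group-like element is symmetric one has $(\Bbbk B)^{o}=\Bbbk B$ as coalgebras, so requiring $\bm\neg$ to be a coalgebra map into $(\Bbbk B)^{o}$ is the same as requiring it into $\Bbbk B$, which holds automatically.

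The conceptual point worth isolating --- and the only place where anything beyond bookkeeping occurs --- is that the comultiplication appearing in the quantum absorption, distributivity and complement axioms is exactly what implements the repeated use of the variable $b$ on the right-hand sides of the classical laws; for classical, i.e. group-like, elements this comultiplication is the literal diagonal, which is precisely why those axioms were formulated with $\Delta$ inserted. Consequently I expect no genuine obstacle here: the statement is the promised sanity check confirming that the quantum axioms specialize correctly. The one technical item I would keep honest is the strong-monoidality identification $\Bbbk(B\times B)\cong\Bbbk B\otimes_\Bbbk\Bbbk B$, under which the linearized binary operations are bona fide maps out of the tensor product rather than out of the Cartesian-product coalgebra; this is exactly what the embedding theorem of Subsection~\ref{set} provides.
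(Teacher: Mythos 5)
Your proof is correct and follows essentially the same route as the paper's: transport the classical structure through the linearization functor and verify every axiom on the group-like basis elements, where $\Delta(b)=b\otimes b$ and $\varepsilon(b)=1$ collapse each quantum diagram to the corresponding classical Boolean identity. You are in fact slightly more careful than the paper, which omits explicit treatment of the involutivity of $\bm\neg$ and of the identification $(\Bbbk B)^{o}=\Bbbk B$ via cocommutativity; both points are handled correctly in your argument.
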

\begin{proof}
    Let $(B, \bot, \top, \neg, \wedge, \vee)$ be a Boolean algebra and $\Bbbk B$ its linearization.

    It is enough to check the axioms on the set $B$ of generators of its linearization $\Bbbk B$, and the rest will follow from linearity.

\noindent\emph{Identity.}     Since $\Bbbk \{\bullet\}$ is a monoidal unit we have have the natuaral isomorphism
                $$\Bbbk B \otimes _\Bbbk \Bbbk \{\bullet\} \stackrel{ \cong }{\longrightarrow} \Bbbk B$$
                $$b \otimes_\Bbbk \bullet \mapsto b$$
    On the other hand we have
    $${B \otimes_\Bbbk \bm\bot}: b \otimes_\Bbbk \bullet \mapsto b \otimes_\Bbbk \bm \bot.$$
    But from the definition of $\bm\vee$ and $\bm \bot$ we get
    
    $$\bm\vee (b \otimes_\Bbbk \bm \bot) = b \vee \bot = b$$
    so the identity diagram commutes.

\noindent\emph{Commutativity.}   Let us chase the commutativity diagram:

        \begin{center}
\begin{tikzcd}
b \otimes_\Bbbk b' \arrow[rrr, maps to, "\tau"] \arrow[rd,maps to, "\bm\vee"'] &&& b' \otimes_\Bbbk b \arrow[ld,maps to, "\bm\vee"] \\
& \bm\vee(b \otimes_\Bbbk b') & \bm\vee(b' \otimes_\Bbbk b) &          
\end{tikzcd}    
\end{center}
    But we know that

    $$ \bm\vee(b \otimes_\Bbbk b') = b\vee b' = b' \vee b = \bm\vee(b' \otimes_\Bbbk b),$$
    so the diagram commutes.

\noindent\emph{Distributivity.}  Again, we chase the relevant diagram:
\begin{center}
    \begin{tikzcd}
b\otimes_\Bbbk b'\otimes_\Bbbk b'' \arrow[rr,maps to, "B \otimes _\Bbbk \bm \wedge"] \arrow[dd,maps to, "\Delta  \otimes _\Bbbk B  \otimes _\Bbbk B"'] &  & {b \otimes _\Bbbk \bm\wedge(b', b'')} \arrow[ddddd, maps to,"\bm\vee"] \\
&  &\\
b\otimes_\Bbbk b\otimes_\Bbbk b'\otimes_\Bbbk b'' \arrow[dd,maps to, "B  \otimes _\Bbbk \tau  \otimes _\Bbbk B"']   &  &\\
&  &\\
b\otimes_\Bbbk b'\otimes_\Bbbk b\otimes_\Bbbk b'' \arrow[dd, "\bm\vee \otimes_\Bbbk \bm \vee"']&  &\\
&  & {\bm\vee(b,\bm\wedge(b', b''))}\\
{\bm\vee(b, b')\otimes_\Bbbk \bm\vee(b, b'')} \arrow[rr,maps to, "\bm \wedge"]&  & {\bm\wedge(\bm\vee(b, b'), \bm\vee(b, b''))}.
\end{tikzcd}  
\end{center}
    The distributive law of the classical Boolean algebra $B$ implies that 

    $$ \bm\vee(b,\bm\wedge(b', b'')) = b \vee (b' \wedge b'') = 
    (b \vee b') \wedge(b\vee b'') 
    = \bm\wedge(\bm\vee(b, b')  ,\bm\vee(b, b'') )$$
    so the diagram commutes.
    Commutativity of the second diagram follows similarly. 

\noindent\emph{Complements.}: Finally, let us look at complements:
    Thanks to the commutativity axiom, in the first diagram it is enough to check only the upper half.
\begin{center}
\begin{tikzcd}
{} \arrow[phantom, loop, distance=2em, in=55, out=125]           &&&&\\
&&&&\\
b \otimes_\Bbbk b \arrow[rrrr, "B \otimes_\Bbbk \bm\neg"]             &&&& b \otimes_\Bbbk \bm\neg(b) \arrow[d, "\bm\vee", maps to] \\
&&&& \bm\vee(b \otimes_\Bbbk \bm\neg(b))                      \\
b \arrow[uu, "\Delta", maps to] \arrow[rr, "\varepsilon", maps to] && \bullet \arrow[rr, "\bm\top", maps to] &  & \top
\end{tikzcd}
\end{center}
From the complement axiom for the Boolean algebra $B$ it follows that 
    $$\bm\vee(b \otimes_\Bbbk \bm\neg(b)) = b \vee \neg b =\top,$$
    hence this upper half does commute. Commutativity of the second diagram will follow in a similar fashion.
\end{proof}
Better still, many properties, when appropriately generalized, of classical Boolean algebras also apply to  representable quantum Boolean algebras. 
\begin{theorem}
    Negation in a representable quantum bounded distributive lattice  is unique.
\end{theorem}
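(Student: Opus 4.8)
The plan is to mimic the classical proof that complements in a bounded distributive lattice are unique, but carried out entirely in the Heyneman-Sweedler calculus of coalgebra maps, keeping careful track of the comultiplications that the quantum (``partial'') setting forces upon us. Suppose $\bm\neg$ and $\bm\neg'$ are two coalgebra maps $B\to B^{o}$ each satisfying the \emph{Complement} axiom. Classically one shows $\neg b=\neg b\wedge\top=\neg b\wedge(b\vee\neg'b)=(\neg b\wedge b)\vee(\neg b\wedge\neg'b)=\bot\vee(\neg b\wedge\neg'b)=\neg b\wedge\neg'b$, and symmetrically $\neg'b=\neg'b\wedge\neg b$, whence by commutativity $\neg b=\neg'b$. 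I would reproduce exactly this chain of equalities, with each classical step replaced by its diagrammatic counterpart from the axioms already stated (Identity, Commutativity, Distributivity, Absorption, Complement).

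Concretely, I would work with the linear maps $\bm\neg,\bm\neg':B\to B$ and prove equality of the two coalgebra maps $B\to B$ by a computation in $B$ applied to a general element $b$, using the conventions $b\wedge b'=\bm\wedge(b\otimes b')$ etc.\ already fixed in the definition. The first step is to write down the quantum analogue of $\neg b=\neg b\wedge\top$: using the \emph{Identity} axiom together with $\top=\bm\top(\bullet)$ and counitality, one gets $\neg(b)=\neg(b_{(1)})\wedge\varepsilon(b_{(2)})\top$, i.e.\ $\neg$ absorbed against the top element via the counit. Next I would insert the \emph{Complement} axiom in the form $\varepsilon(b)\top=b_{(1)}\vee\neg'(b_{(2)})$, being careful that the comultiplication indices line up so that the resulting triple product is legitimately of the shape appearing in the \emph{Distributivity} diagram. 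Applying distributivity then splits $\neg(b_{(1)})\wedge(b_{(2)}\vee\neg'(b_{(3)}))$ into $(\neg(b_{(1)})\wedge b_{(2)})\vee(\neg(b_{(3)})\wedge\neg'(b_{(4)}))$, after which the other half of the \emph{Complement} axiom, $\neg(b_{(1)})\wedge b_{(2)}=\varepsilon(b)\bot$, collapses the first summand to $\bot$, and the \emph{Identity}/absorption axiom for $\bot$ removes it. This yields $\neg(b)=\neg(b_{(1)})\wedge\neg'(b_{(2)})$; running the symmetric computation with the roles of $\bm\neg,\bm\neg'$ exchanged gives $\neg'(b)=\neg'(b_{(1)})\wedge\neg(b_{(2)})$, and \emph{Commutativity} of $\bm\wedge$ identifies the two right-hand sides, forcing $\bm\neg=\bm\neg'$.

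The main obstacle I anticipate is bookkeeping of the Sweedler indices rather than any conceptual difficulty: in the classical argument each factor $b$ is literally reused, whereas here every reuse of $b$ must be routed through $\Delta$, so the expression $\neg b\wedge\top$ genuinely becomes a two-fold (and after substituting the complement axiom, a three- or four-fold) coproduct, and one must verify that the sequence of comultiplications and transpositions produced by chaining the diagrams is exactly the one demanded by the \emph{Distributivity} diagram (which builds in a specific $\Delta\otimes B\otimes B$ followed by a middle transposition $\tau$). Ensuring coassociativity reconciles the index pattern coming from ``$\neg b\wedge\top$'' with the pattern fed into distributivity is where care is needed; the one genuinely quantum subtlety is that commutativity of $\bm\wedge$ is available (it is an axiom) so the final identification of $\neg(b_{(1)})\wedge\neg'(b_{(2)})$ with $\neg'(b_{(1)})\wedge\neg(b_{(2)})$ is legitimate even though $B$ need not be cocommutative. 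Since all the needed coalgebra maps are genuine (total) morphisms in $\coalg$, no partiality issue arises and the equality of maps may be checked after composing with $\Delta$'s in $\vect$, exactly as in the calculations proving Theorem \ref{sub}.
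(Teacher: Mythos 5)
Your first half is essentially on track, but the way you plan to finish the proof does not work, and the failure point is precisely the spot you flag as ``legitimate.'' Commutativity of $\bm\wedge$ is the identity $\bm\wedge\circ\tau=\bm\wedge$: it swaps the two tensor factors fed into $\bm\wedge$, but the Sweedler indices travel with their factors. So from $\neg'(b)=\neg'(b_{(1)})\wedge\neg(b_{(2)})$ commutativity gives $\neg'(b)=\neg(b_{(2)})\wedge\neg'(b_{(1)})$, \emph{not} $\neg(b_{(1)})\wedge\neg'(b_{(2)})$. Identifying $\neg(b_{(2)})\wedge\neg'(b_{(1)})$ with $\neg(b_{(1)})\wedge\neg'(b_{(2)})$ requires exchanging $b_{(1)}\otimes b_{(2)}$ with $b_{(2)}\otimes b_{(1)}$, i.e.\ cocommutativity of $B$, which is exactly what you may not assume. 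Hence the classical scheme ``prove $\neg b=\neg b\wedge\neg' b$, prove $\neg' b=\neg' b\wedge\neg b$, glue by commutativity'' does not quantize: the two symmetric half-computations produce expressions with incompatible index patterns, and no axiom lets you cross between them. A secondary (fixable) issue: in your distributivity step, the copies of $\neg(b_{(1)})$ are $\neg(b_{(1)})_{(1)}\otimes\neg(b_{(1)})_{(2)}=\neg(b_{(1)(2)})\otimes\neg(b_{(1)(1)})$, because $\bm\neg$ is a coalgebra map into $B^{o}$; this flip is what makes the first disjunct come out as $\neg(b_{(2)})\wedge b_{(3)}$ with \emph{adjacent} indices so that the complement axiom can kill it. Your written split $(\neg(b_{(1)})\wedge b_{(2)})\vee(\neg(b_{(3)})\wedge\neg'(b_{(4)}))$ has the right shape only by accident of notation; the anticoalgebra property is what actually licenses it.

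What the paper does instead is a single palindromic chain from $\widetilde{\bm\neg}(b)$ to $\bm\neg(b)$ that never needs to compare the two mixed expressions. It reaches, as you do, the stage $(\varepsilon(b_{(2)})\bot)\vee(\widetilde{\bm\neg}(b_{(1)})\wedge\bm\neg(b_{(3)}))$, but then, rather than absorbing the $\bot$ and stopping at a mixed term, it runs the first half \emph{backwards with the roles of the two negations exchanged}: it re-expands $\varepsilon(b_{(2)})\bot$ via the other form of the complement axiom as $b_{(2)(1)}\wedge\bm\neg(b_{(2)(2)})$, uses commutativity of $\bm\vee$ and $\bm\wedge$ together with the reverse anticoalgebra flip to rewrite the whole expression as $(\bm\neg(b_{(3)})_{(1)}\wedge\widetilde{\bm\neg}(b_{(1)}))\vee(\bm\neg(b_{(3)})_{(2)}\wedge b_{(2)})$, factors it by distributivity into $\bm\neg(b_{(3)})\wedge(\widetilde{\bm\neg}(b_{(1)})\vee b_{(2)})$, and then collapses the parenthesis to $\varepsilon(b_{(1)})\top$ by the complement axiom for $\widetilde{\bm\neg}$, leaving $\bm\neg(b)$. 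This second half is genuine additional content --- it is where commutativity, distributivity (in the factoring direction), the anticoalgebra property, and \emph{both} complement identities for \emph{both} negations are all used again --- and it is exactly what your proposal is missing.
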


\begin{proof}
Assume we have two negation coalgebra maps $ {\bm\neg}, \widetilde{\bm\neg} : B \rightarrow B^{op}$.
Using the following axioms of a representable quantum bounded distributive lattice and the complement axiom for the negation morphism we get

    \begin{equation}
        \begin{split}
          &  \widetilde{\bm\neg} (b) \\
(Identity )\quad =     \  &  \widetilde{\bm\neg}(b) \wedge \top \\
   (counit) \quad =     \    &  \widetilde{\bm\neg} (b_{(1)} \varepsilon(b_{(2)})) \wedge \top \\
(linearity) \quad =          \  &  \widetilde{\bm\neg} (b_{(1)}) \wedge \varepsilon(b_{(2)})\top \\
(complement)  \quad =       \    &  \widetilde{\bm\neg} (b_{(1)}) \wedge (b_{(2)(1)} \vee \bm\neg(b_{(2)(2)})) \\
(distributivity)  \quad =       \    &  (\widetilde{\bm\neg}(b_{(1)})_{(1)} \wedge b_{(2)(1)})
                \vee
                 (\widetilde{\bm\neg}(b_{(1)})_{(2)} \wedge \bm\neg(b_{(2)(2)})) \\ 
(anticoalg) \quad =         \   &  (\widetilde{\bm\neg}(b_{(1)(2)}) \wedge b_{(2)(1)})
            \vee (\widetilde{\bm\neg}(b_{(1)(1)}) \wedge \bm\neg(b_{(2)(2)}))) \\ 
(renumerate) \quad =         \   &  (
             (\widetilde{\bm\neg}(b_{(2)}) \wedge b_{(3)})
            \vee (\widetilde{\bm\neg}(b_{(1)}) \wedge \bm\neg(b_{(4)}))) \\ 
 (renumerate) \quad =       \    & 
             (
                \widetilde{\bm\neg}(b_{(2)(1)}) \wedge b_{(2)(2)})
            \vee
            (
            \widetilde{\bm\neg}(b_{(1)}) \wedge \bm\neg(b_{(3)})) \\ 
(complement) \quad =        \    &  (
            \varepsilon(b_{(2)})\bot
            \vee
             (
            \widetilde{\bm\neg}(b_{(1)}) \wedge \bm\neg(b_{(3)}))) \\
  (complement)  \quad =      \   & (
             (
                b_{(2)(1)} \wedge \bm\neg(b_{(2)(2)}))
            \vee
             (
            \widetilde{\bm\neg}(b_{(1)}) \wedge \bm\neg(b_{(3)})))  \\
(renumerate) \quad =       \     &  (
             (
                b_{(2)} \wedge \bm\neg(b_{(3)(1)}))
            \vee
            (
            \widetilde{\bm\neg}(b_{(1)}) \wedge \bm\neg(b_{(3)(2)})))  \\
(anticoalg)  \quad =        \   &   (
             (
                b_{(2)} \wedge \bm\neg(b_{(3)})_{(2)}))
            \vee
            (
            \widetilde{\bm\neg}(b_{(1)}) \wedge \bm\neg(b_{(3)})_{(1)}))  \\
 (commutativity)\quad =         \   &  (
             (
            \widetilde{\bm\neg}(b_{(1)}) \wedge \bm\neg(b_{(3)})_{(1)})
            \vee
             (
            b_{(2)}) \wedge \bm\neg(b_{(3)})_{(2)}))  \\
(commutativity) \quad =       \     &  (
             (
                \bm\neg(b_{(3)})_{(1)}) \wedge \widetilde{\bm\neg}(b_{(1)}) 
            \vee
            \bm\wedge (
            \bm\neg(b_{(3)})_{(2)} \wedge b_{(2)} ))  \\
(distributivity)\quad =        \     & 
                ( \bm\neg(b_{(3)}) \wedge
                 (\widetilde{\neg} (b_{(1)}) \vee b_{(2)})) \\
(renumerate) \quad =        \    & 
                ( \bm\neg(b_{(2)}) \wedge
                 (\widetilde{\neg} (b_{(1)(1)}) \vee b_{(1)(2)})) \\
(complement) \quad =       \     & 
                (\bm\neg(b_{(2)}) \wedge
                \varepsilon(b_{(1)})\top)\\
(linearity) \quad =        \    & 
                (\bm\neg(\varepsilon(b_{(1)}) b_{(2)}) \wedge
                \top) \\
 (counit)  \quad =     \   &
                (\bm\neg(b) \wedge
                \top) \\
(identity) \quad        =\  &      \bm\neg(b)
        \end{split}
    \end{equation}
\end{proof}

In the classical Boolean setting, by the complement axiom, the first de Morgan law$$ \bm\neg (b \vee b') 
    = \bm\neg( b) \wedge \bm\neg (b'),$$ implies its apparently weaker version
    \begin{equation}
        \begin{split}
     (b \vee b') \vee (\neg(b) \wedge \neg(b'))=(b \vee b') \vee (\neg(b \vee b'))=\top\\
     (b \vee b') \wedge (\neg(b) \wedge \neg(b'))=(b \vee b') \wedge (\neg(b \vee b'))=\bot.
     \end{split}
    \end{equation}
    However, by uniquness of the complement argument, these two versions are equivalent. Similarly, the second de Morgan law 
    $$ \bm\neg (b \wedge b') 
    = \bm\neg( b) \vee \bm\neg (b')$$
    is equivalent to its weak version
    \begin{equation}
        \begin{split}
     (b \wedge b') \wedge (\neg(b) \vee \neg(b'))=(b \wedge b') \wedge (\neg(b \wedge b'))=\bot\\
     (b \wedge b') \vee (\neg(b) \vee \neg(b'))=(b \wedge b') \vee (\neg(b \wedge b'))=\top.
     \end{split}
    \end{equation}
    
    In the next theorem we prove a quantum counterpart of the weak de Morgan laws.
\begin{theorem}
In every representable quantum Boolean algebra the following weak de Morgan laws hold
\begin{itemize}
    \item 1st de Morgan law 
    $$(b_{(1)}  \vee b'_{(1)}) \vee (\neg(b_{(2)}) \wedge \neg(b'_{(2)}))= \varepsilon(b)\varepsilon(b')\top,
    $$ 
    $$(b_{(1)}  \vee b'_{(1)}) \wedge (\neg(b_{(2)}) \wedge \neg(b'_{(2)}))= \varepsilon(b)\varepsilon(b')\bot.$$ 
    \item 2nd de Morgan law 
    $$(b_{(1)}  \wedge b'_{(1)}) \wedge (\neg(b_{(2)}) \vee \neg(b'_{(2)}))= \varepsilon(b)\varepsilon(b')\top,
    $$ 
    $$(b_{(1)}  \wedge b'_{(1)}) \vee (\neg(b_{(2)}) \vee \neg(b'_{(2)}))= \varepsilon(b)\varepsilon(b')\bot.$$ 
\end{itemize}

\end{theorem}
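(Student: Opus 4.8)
The plan is to prove the weak de Morgan laws by a direct Heyneman--Sweedler computation, reducing each side to $\top$ or $\bot$ by distributing, collapsing one complement pair at a time, and absorbing the leftover factor. The argument is not the naive translation of the classical proof: since $\bm\vee$ and $\bm\wedge$ are coalgebra maps, every time one factor is distributed its comultiplication splits, so I must track carefully which pair of Sweedler legs becomes \emph{consecutive} (so that the complement axiom $b_{(1)}\vee\neg(b_{(2)})=\varepsilon(b)\top$, resp. $b_{(1)}\wedge\neg(b_{(2)})=\varepsilon(b)\bot$, applies) and which legs are contracted by the counit.

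First I would record two auxiliary identities that are not among the axioms but which make the bookkeeping close, namely that $\top$ is absorbing for $\bm\vee$ and $\bot$ is absorbing for $\bm\wedge$:
\begin{equation}
x\vee\top=\varepsilon(x)\top,\qquad x\wedge\bot=\varepsilon(x)\bot.
\end{equation}
Each follows in one line from an absorption axiom together with the fact that $\top=\bm\top(\bullet)$ and $\bot=\bm\bot(\bullet)$ are grouplike with counit $1$. For instance, taking the first absorption law $b_{(1)}\wedge(b_{(2)}\vee b')=\varepsilon(b')b$ with $b=\top$ and $b'=x$, grouplikeness gives $\top\wedge(\top\vee x)=\varepsilon(x)\top$, and the \emph{Identity} axiom together with commutativity turns the left-hand side into $x\vee\top$; the bottom case is dual.

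Next I would prove the first law. For the join equation I peel $b_{(1)}$ off by associativity of $\bm\vee$, leaving $b'_{(1)}\vee(\neg(b_{(2)})\wedge\neg(b'_{(2)}))$, and distribute $\bm\vee$ over $\bm\wedge$, splitting $b'_{(1)}$. After renumbering this produces $(b'_{(1)}\vee\neg(b_{(2)}))\wedge(b'_{(2)}\vee\neg(b'_{(3)}))$; the second factor has consecutive $b'$-legs, so the complement axiom collapses it to $\top$ while the counit restores the full $b'$ in the first factor. The Identity axiom then removes the resulting $\wedge\,\top$, a final commutativity/associativity rearrangement gives $b'\vee(b_{(1)}\vee\neg(b_{(2)}))=b'\vee\varepsilon(b)\top$, and the top-absorbing lemma yields $\varepsilon(b)\varepsilon(b')\top$. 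The meet equation runs dually: I group the two complements together, distribute $\bm\wedge$ over $\bm\vee$ splitting $\neg(b'_{(2)})$ (using that $\bm\neg$ is an anti-coalgebra map, so $\neg(b'_{(2)})_{(1)}\otimes\neg(b'_{(2)})_{(2)}=\neg(b'_{(3)})\otimes\neg(b'_{(2)})$), collapse the consecutive pair via $b'_{(1)}\wedge\neg(b'_{(2)})=\varepsilon(b')\bot$, remove the $\vee\,\bot$ by the Identity axiom, rearrange to $\neg(b')\wedge\varepsilon(b)\bot$, and finish with the bottom-absorbing lemma together with counit-preservation $\varepsilon(\neg(b'))=\varepsilon(b')$.

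Finally, the second law follows with no further computation from the manifest self-duality of the axioms of a representable quantum bounded distributive lattice under the interchange $(\bm\wedge,\bm\top)\leftrightarrow(\bm\vee,\bm\bot)$, which fixes $\bm\neg$ and permutes the four complement relations among themselves; applying this involution to the two equations of the first law produces the two equations of the second law, with the roles of $\top$ and $\bot$ on the right-hand sides exchanged accordingly. The main obstacle throughout is purely combinatorial, namely arranging the distributions so that exactly one complement pair is consecutive at each stage and then disposing of the surviving factor by an absorbing lemma; once the two absorbing lemmas are in place, the classical skeleton of the argument goes through essentially verbatim.
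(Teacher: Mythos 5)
Your proof is correct, and for the first de Morgan law it is essentially the computation in the paper: distribute the join over the meet of negations, collapse the consecutive $b'$-legs by the complement axiom, absorb the leftover factor via $x\vee\top=\varepsilon(x)\top$, then collapse the $b$-legs and absorb again. The only organizational differences are that you peel off $b_{(1)}$ by associativity before distributing, so you never need that $\bm\vee$ is a coalgebra map in that computation (the paper instead splits $(b_{(1)}\vee b'_{(1)})$ as a whole, using exactly that fact), and that you actually derive the two absorbing identities $x\vee\top=\varepsilon(x)\top$ and $x\wedge\bot=\varepsilon(x)\bot$ from the absorption axiom and grouplikeness of $\top,\bot$, which the paper invokes as ``$\top$ absorbs'' without proof. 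Your meet-equation computation likewise mirrors the paper's, differing only in that you split a single $\neg(b'_{(2)})$ via the anti-coalgebra property of $\bm\neg$, rather than the whole factor $\neg(b_{(2)})\wedge\neg(b'_{(2)})$ as the paper does; both are valid.

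Where you genuinely diverge is the second law. The paper merely says ``the proof is similar,'' i.e., redo the Sweedler computation with the roles of $\bm\wedge$ and $\bm\vee$ interchanged; you instead observe that the axiom system of a representable quantum Boolean algebra is invariant under the involution $(\bm\wedge,\bm\top)\leftrightarrow(\bm\vee,\bm\bot)$, which fixes $\bm\neg$ and permutes the four complement identities, so the second law is the formal dual of the first. This is cleaner, and it has a useful side effect: the duality yields
$(b_{(1)}\wedge b'_{(1)})\wedge(\neg(b_{(2)})\vee\neg(b'_{(2)}))=\varepsilon(b)\varepsilon(b')\bot$ together with
$(b_{(1)}\wedge b'_{(1)})\vee(\neg(b_{(2)})\vee\neg(b'_{(2)}))=\varepsilon(b)\varepsilon(b')\top$,
i.e., the version consistent with the classical weak second de Morgan law that the paper records immediately before the theorem. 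The right-hand sides printed in the theorem statement itself ($\top$ for the meet equation, $\bot$ for the join equation) are transposed --- as written they already fail classically, e.g.\ for $b=b'=\bot$ --- so this is evidently a typo in the paper, and your argument, besides being shorter, proves the intended corrected statement, a discrepancy the paper's ``similar proof'' remark never confronts.
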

\begin{proof}
Using the axioms listed below we proceed as follows.

    \begin{equation}
        \begin{split}
        &  (b_{(1)}  \vee b'_{(1)}) \vee (\neg(b_{(2)}) \wedge \neg(b'_{(2)}))  \\
        (distr)\quad  =\ &((b_{(1)}  \vee b'_{(1)})_{(1)} \vee \neg(b_{(2)} )
        \wedge ((b_{(1)}  \vee b'_{(1)})_{(2)} \vee \neg(b'_{(2)} ))\\
        (coalg)\quad  =\ &((b_{(1)(1)}  \vee b'_{(1)(1)}) \vee \neg(b_{(2)} ))
        \wedge ((b_{(1)(2)}  \vee b'_{(1)(2)}) \vee \neg(b'_{(2)} ))\\
        (renumerate)\quad  =\ &((b_{(1)}  \vee b'_{(1)}) \vee \neg(b_{(3)} ))
        \wedge ((b_{(2)}  \vee b'_{(2)}) \vee \neg(b'_{(3)} ))\\
        (assoc\ \&\ comm)\quad  =\ &(b'_{(1)} \vee (b_{(1)}  \vee \neg(b_{(3)} )))
        \wedge (b_{(2)}  \vee (b'_{(2)} \vee \neg(b'_{(3)}))) \\
        (complement)\quad  =\ &(b'_{(1)} \vee (b_{(1)}  \vee \neg(b_{(3)} )))
        \wedge (b_{(2)}  \vee \varepsilon(b'_{(2)})\top) \\
        (linearity)\quad  =\ &(b'_{(1)}\varepsilon(b'_{(2)}) \vee (b_{(1)}  \vee \neg(b_{(3)} )))\wedge (b_{(2)} \vee \top) \\
        (counit)\quad =\ &(b' \vee (b_{(1)}  \vee \neg(b_{(3)} )))\wedge (b_{(2)} \vee \top) \\
        (\top \ absorbs)\quad  =\ &(b' \vee (b_{(1)}  \vee \neg(b_{(3)} )))\wedge (\varepsilon(b_{(2)})\top) \\
        (linearity)\quad  =\ &(b' \vee (b_{(1)}  \vee \neg(\varepsilon(b_{(2)})b_{(3)} )))\wedge \top \\
        (counit)\quad  =\ &(b' \vee (b_{(1)}  \vee \neg((b_{(2)} )))\wedge \top \\
        (identity)\quad  =\ &b' \vee (b_{(1)}  \vee \neg(b_{(2)} )) \\
        (complement)\quad  =\ &b' \vee \varepsilon(b)\top\\
        (linearity\ \&\ \top \ absorbs)\quad =\ & \varepsilon(b)\varepsilon(b')\top.
        \end{split}
    \end{equation}

    \begin{equation}
        \begin{split}
         & (b_{(1)}  \vee b'_{(1)}) \wedge (\neg(b_{(2)}) \wedge \neg(b'_{(2)}))   \\
        (distr)\quad   =\ &(b_{(1)} \wedge (\neg(b_{(2)}) \wedge \neg(b'_{(2)}))_{(1)}) \vee (b'_{(1)} \wedge (\neg(b_{(2)}) \wedge \neg(b'_{(2)}))_{(2)})  \\
        (coalg)\quad  =\ &(b_{(1)}  \wedge (\neg(b_{(2)})_{(1)} \wedge \neg(b'_{(2)})_{(1)} ))
        \vee (b'_{(1)}  \wedge (\neg(b_{(2)})_{(2)} \wedge \neg(b'_{(2)})_{(2))}))\\
        (\neg\text{ is anticoalg})\quad   =\ &(b_{(1)}  \wedge (\neg(b_{(2)(2)}) \wedge \neg(b'_{(2)(2)} )))
        \vee (b'_{(1)}  \wedge (\neg(b_{(2)(1)} \wedge \neg(b'_{(2)(1))}))\\
        (\text{ren, ass, com})\quad   =\ &(b_{(1)}  \wedge \neg(b_{(2)(2)})) \wedge \neg(b'_{(2)} ))
        \vee ((b'_{(1)(1)}  \wedge \neg(b'_{(1)(2)}) \wedge \neg(b_{(2)(1))}))\\
        (complement)\quad   =\ &(b_{(1)}  \wedge \neg(b_{(2)(2)})) \wedge \neg(b'_{(2)(2)} ))
        \vee (\varepsilon(b'_{(1)})\bot \wedge \neg(b_{(2)(1))}))\\
        (\bot \text{absorbs, linearity})\quad   =\ &(
        b_{(1)}  \wedge \neg(b_{(2)(2)})) \wedge \neg(\varepsilon(b'_{(1)})b'_{(2)} ))
        \vee (\varepsilon(\neg(b_{(2)(1))})\bot ))\\
        (\text{counit})\quad   =\ &(
        b_{(1)}  \wedge \neg(b_{(2)(2)})) \wedge \neg(b'))
        \vee (\varepsilon(\neg(b_{(2)(1))})\bot ))\\
        (\text{linearity})\quad   =\ &(
        b_{(1)}  \wedge \neg(\varepsilon((b_{(2)(1))})b_{(2)(2)})) \wedge \neg(b'))
        \vee \bot \\
        (counit)\quad   =\ &(
        b_{(1)}  \wedge \neg(b_{(2)})) \wedge \neg(b'))
        \vee \bot \\
        (complement)\quad   =\ &(
        \varepsilon(b) \bot \wedge \neg(b'))
        \vee \bot \\
        (\bot \text{ absorbs})\quad   =\ &\varepsilon(b) \varepsilon(b') \bot
        \end{split}
    \end{equation}

    For the weak second de Morgan law the proof is similar.

\end{proof}

\section{Quantum Quivers}
Besides single coalgebras, one can consider diagrams of coalgebras, among which the most important are {\bf quantum quivers}.
Given a coalgebra $D$, the (co)opposite  coalgebra is denoted by $D^{o}$.  The following definition is adapted from the definition of the underlying quiver of a quantum category explicitly defined by Chikhladze \cite{C-11}  inspired by Day-Street \cite{D-S-03},  to our context of representable quantum sets.
 \begin{definition}
Consider a pair of coalgebra maps $\partial_0: D_1\rightarrow D_0^{o}$,  $\partial_1: D_1\rightarrow D_0$ satisfying for every $d_1\in D_1$ 
$$\partial_0(d_{1(1)})\otimes \partial_1(d_{1(2)} )= \partial_0(d_{1(2)})\otimes \partial_0(d_{1(1)}).$$ 
Denote the representable presheaves $E:=\Y(D_1)$, $V:=\Y(D_0)$, $V^{o}:=\Y(D_0^{o})$ and call them the representable quantum sets of {\bf edges},   {\bf vertices} and   {\bf opposite vertices} of the representable {\bf quantum quiver}, respectively. 
\end{definition}
Note that  for every $e\in E(C)$ we have $s_C(e):=\partial_0\circ e \in  V^{o}(C)$ and $t_C(e):=\partial_1\circ e \in  V(C)$, which we call the {\bf source} and the {\bf target} of an edge $e$, respectively. Then,  since the full subcategory $\cat$ of $q\set$ whose objects are representable presheaves  is partial monoidal in the sense of Definition \ref{def(1, 2)}, we have
\begin{align}\label{qquiver}
(s_C(e), t_C(e))\in (V\times_{\cat}V^{o})(C).
\end{align}
Now it is clear how to define {\bf quantum quiver}, which is not necessarily representable. First, we fix  some partial-monoidal subcategory $\cat$ of $q\set$ as a super-structure, next, given a quantum set of vertices, we define the quantum set of opposite vertices as follows $V^{o}(C):=V(C^{o})$, and finally we complement it with natural transformations $s: E\rightarrow V^{o}$ and $t: E\rightarrow V$ satisfying (\ref{qquiver}) for every $e\in E(C)$.

\subsection{Correspondences}
\begin{definition}
With any representable quantum set  $X=\Y(C)$ as above we associate a category $\cat_{X}:=\comod^{op}_{\hspace{-0.1em}C}$, the opposite category of right $C$-comodules. With every morphism $f: X\rightarrow Y$,  $Y=\Y(D)$ of representable quantum sets represented by a coalgebra map $C\rightarrow D$ we associate a   pair of adjoint functors  
\begin{center}
\begin{tikzcd}[column sep=-5em, row sep=0.75em]
& \cat_{X} \arrow[bend right=55]{dd}{\hspace{0.2em}f^{*}} & \\
  & \dashv   &  \\                     
 & \cat_{Y}.\arrow[bend right=55]{uu}{f_{*}} &  
\end{tikzcd}
\end{center}
described as follows. The reader should beware that we work here with opposite categories of comodules but we write the structural maps in the category of vector spaces without reversing arrows. The right adjoint {\bf corestriction} functor $f_{*}$ is defined by the composition
$$f_{*}(M\rightarrow M\otimes C):= (M\rightarrow M\otimes C\rightarrow M\otimes D)$$
where the second composed arrow on the right hand side is induced by the coalgebra map $C\rightarrow D$. The  left adjoint {\bf coinduction} functor $f^{*}$ is defined as 
$$f^{*}(N\rightarrow N\otimes D):=(N\Box_{D}C\rightarrow N\Box_{D}C\otimes C)$$
where the  arrow on the right hand side is induced by the comultiplication $C\rightarrow C\otimes C$.
\end{definition}
\begin{definition}\label{discr}
We say that $f$ is {\bf discrete} if there is an adjunction $f_{!}\dashv f^{*}$.
\end{definition}
Definition \ref{discr} is motivated by the following classical example.
\begin{example}
Let $C:=\Bbbk S$ for a set $S$. Then the category $\cat_{X}$ is the opposite category of vector spaces $M$ with an $S$-decomposition $M=\oplus_{s\in S}M_{s}$, with the $C$-comodule structure $M\longrightarrow M\otimes C$ of the form $m_{s}\mapsto m_{s}\otimes s$ for every $m_{s}\in M_{s}$, and morphisms being linear maps preserving the $S$-decomposition. If $f: S\rightarrow T$ is a map of sets, it is equivalent to the coalgebra map $f: C\rightarrow D$ (therefore we use the same $f$ for denoting both) where $D=\Bbbk T$, inducing the following adjunctions $f_{!}\hspace{-0.15em}\dashv f^{*}\hspace{-0.15em}\dashv  f_{*}$ for functors  $f_{!}, f_{*}:\cat_{X}\rightarrow \cat_{Y}$, $ f_{*}:\cat_{Y}\rightarrow \cat_{X}$
\begin{align*}
(f_{!}M)_{t} = \prod_{s\in f^{-1}(t)}M_{s},\ \  
(f^{*}N)_{s} = N_{f(s)},\ \ 
(f_{*}M)_{t} = \bigoplus_{s\in f^{-1}(t)}M_{s}.
\end{align*}
Note that when fibers of $f$ are finite (such maps are called {\bf quasi-finite}), $f_{!}=f_{*}$ hence $f_{*}$ and $f^{*}$ form a {\bf Frobenius pair} of functors \cite{C-M-Z-07}. 
\end{example}
Here is an example where we allow our sets to be quantum, representable by coalgebras. 
\begin{example}\label{quant-corr}
Every coalgebra map $f: C\rightarrow D$ induces an adjunction $f^{*}\dashv f_{*}$ of functors $f_{*}:\cat_{X}\rightarrow \cat_{Y}$, $ f^{*}:\cat_{Y}\rightarrow \cat_{X}$ as above. If  $T$  is a $(C, D)$-bicomodule such that there is an isomorphism of functors $cohom_{D}(T, -)\xlongrightarrow{\cong} -\ \Box_{D}C$, by the definition of $cohom$  as a left adjoint functor between comodule categories\cite{T-77}, the left adjoint  $f_{!}:\cat_{X}\rightarrow \cat_{Y}$, $f_{!}M=M\Box_{C}T$, to $f^{*}$  exists implying that $f$ is discrete. If, in addition, $T$  is  isomorphic to $C$ as a $(C, D)$-bicomodule, we have $f_{!}=f_{*}$, i.e.  $f_{*}$ and  $f^{*}$ form a Frobenius pair of functors.
\end{example}

\begin{definition}
By a {\bf quantum correspondence} from $X$ to $Y$ we mean a morphism  of representable quantum sets of the form $F\longrightarrow  Y\times_{\cat}  X^{o}$. The associated diagram in $q\set$
\begin{center}
\begin{tikzcd}[column sep=small]
& F \arrow[dl, "t", swap] \arrow[dr, "s"] & \\
 Y & &  X^{o}
\end{tikzcd}
\end{center}
defines the {\bf source}  and the {\bf target} morphisms, $s$ and $t$, respectively. We say that the correspondence is a quantum {\bf multivaled map} from $X$ to $Y$, if $s$ is discrete. Any quantum multivaled map  from $X$ to $Y$ between representable quantum sets represented by coalgebras $D_{X}$ and $D_{Y}$, respectively,  with $F$ represented by a coalgebra $D_{F}$, defines adjunctions 
$s_{!}\dashv s^{*}\dashv s_{*}$ and $t^{*}\dashv t_{*}$ where  $s_{!}, s_{*}: \cat_{F}\rightarrow \cat_{X}$,  $s^{*}: \cat_{X}\rightarrow \cat_{F}$ ,  $t^{*}: \cat_{Y}\rightarrow \cat_{F}$ and $t_{*}: \cat_{F}\rightarrow \cat_{Y}$  as in Example \ref{quant-corr} with $T$ being $D_{F}$ regarded at first as a right $D_{Y}\otimes D_{X}^{o}$-comodule via the coalgebra map $D_{F}\rightarrow D_{Y}\otimes D_{X}^{o}$, and next  as a $(D_{X}, D_{Y})$-bicomodule by regarding the right $D_{X}^{o}$-coaction as a left $D_{X}$-coaction.
\end{definition} 
\subsection{A categorification of the Cuntz-Pimsner algebra}
Let $Q$ be an adjunction $Q^{*}\dashv Q_{*}$ between two endofunctors on some category $\cat$, with the unit $\eta: {\rm Id}\Rightarrow Q_{*}Q^{*}$ and counit $\varepsilon: Q^{*}Q_{*}\Rightarrow  {\rm Id}$ natural transformations.
\begin{definition}
We define the  {\bf stable category} $\st_{\hspace{-0.15em}Q}$ of the adjunction $Q$ as follows. The objects of $\st_{\hspace{-0.15em}Q}$  are objects $N$ of $\cat$ being equipped  with a retraction of $Q_{*}N$ onto $N$ inducing a retraction of $Q^{*}Q_{*}N$ onto $Q^{*}N$ leaving the counit of the adjunction  stable. Morphisms of  $\st_{\hspace{-0.15em}Q}$  are morphisms of $\cat$ preserving the retraction. 
\end{definition}

The  definition of making an object $N$ belong to  the stable category of the adjunction $Q$ can be rewritten as the existence of two morphisms $
\omega_{N}: N\rightarrow Q_{*}N$ and an opposite-directed morphism  $\sigma_{N}: Q_{*}N\xdashrightarrow N$ (by using the dashed arrow we stress its {\bf wrong-way map} nature)
satisfying the equations
\begin{align}\label{stab-eqns}
\sigma_{N}\circ \omega_{N} =  {\rm Id}_{N}, \hspace{1em}\varepsilon_{N}\circ Q^{*}(\omega_{N}\circ \sigma_{N})  =\varepsilon_{N},
\end{align} 
while the retraction preserving morphisms $\varphi: N\rightarrow N'$  are those satisfying
\begin{align}\label{stab-mor-eqns}
Q_{*}(\varphi)\circ \omega_{N} =  \omega_{N'}\circ  \varphi, \hspace{1em}\varphi\circ \sigma_{N}  =\sigma_{N'}\circ Q_{*}(\varphi).
\end{align}

On the other hand, given an adjunction $Q$ as above, we can form the following monad according to the formalism of \cite{S-72}. 
\begin{definition}
We define the {\bf Cuntz-Pimsner monad} of the adjunction $Q$ as the quotient of the free monad generated by the coproduct $Q^{*}\sqcup Q_{*}$ of endofunctors by the congruence generated by the following system of identities
\begin{align}\label{CP-ident} 
\sigma_{N}\circ Q_{*}(\alpha_{N})\circ \eta_{N}   =  {\rm Id}_{N}, \hspace{3.15em}\alpha_{N}\circ Q^{*}(\sigma_{N})  =\varepsilon_{N}
\end{align}
satisfied by pairs of morphisms $\alpha_{N}: Q^{*}N\rightarrow N$, $\sigma_{N}: Q_{*}N\xdashrightarrow N$.
 \end{definition}

\begin{example}\label{CPring} For a  unital ring $B$ let us consider a $B$-bimodule $P$ which is finitely generated projective as a right $B$-module. Then we have an  adjunction $Q^{*}\dashv  Q_{*}$ between two endofunctors 
$$Q^{*}:=(-)\otimes_{B}P\ \ \ {\rm and}\ \ \  Q_{*}:=\modB (P, -) \cong (-)\otimes_{B}P^{*}$$
 on the category of right $B$-modules. The unit of that adjunction expresses in terms of the dual basis $\eta: B\rightarrow \End_{B}(P)\cong P\otimes_{B}P^{*}$, $ b\mapsto  bp^{i}\otimes_{B} p^{*}_{i}=p^{i}\otimes_{B} p^{*}_{i}b$ satisfying the property $p=p^{i}\cdot p^{*}_{i}(p)$ (summation over $i$ suppresed). The counit of that adjunction expresses in terms of the evaluation $\varepsilon: P^{*}\otimes_{B} P\rightarrow B$, $p^{*}\otimes_{B}p\mapsto p^{*}(p)$. 
\begin{definition}
With a bimodule $P$ as above, we introduce the following  quotient algebra of the tensor algebra $T_{B}\big( P\oplus P^{*}\big)$ of a $B$-bimodule $P\oplus P^{*}$
\begin{align}\label{Cu-Pims}
\mathbcal{O}_{P}:=T_{B}\big( P\oplus P^{*}\big)/\big( b -\eta(b), p^{*}\otimes_{B}p -\varepsilon(p^{*}\otimes_{B}p)\big).
\end{align}
\end{definition}

Let us note that  \eqref{Cu-Pims} is an algebraic Cuntz-Pimsner ring in the sense of \cite{C-O-11} with respect to the canonical evaluation pairing $\varepsilon: P^{*}\otimes_{B} P\rightarrow B$. 
The next proposition relates this Cuntz-Pimsner ring to our Cuntz-Pimsber monad, justifying our terminology, as follows.
\begin{theorem}\label{C-P}
The Cuntz-Pimsner monad of the adjunction defined by a finitely generated projective from the right $B$-bimodule $P$ as above is isomorphic to the Cuntz-Pimsner $B$-ring $\mathbcal{O}_{P}$.
\end{theorem}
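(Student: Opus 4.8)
The plan is to identify the Cuntz--Pimsner monad $\mathbb{T}$ with the monad $(-)\otimes_{B}\mathbcal{O}_{P}$ of tensoring with the $B$-ring $\mathbcal{O}_{P}$, and then to invoke the standard dictionary between $B$-rings and colimit-preserving monads on $\modB$: a $B$-ring $R$ determines the cocontinuous monad $(-)\otimes_{B}R$ (with multiplication induced by that of $R$), and conversely a cocontinuous monad is recovered, up to canonical isomorphism, from its value on $B$, which carries a natural $B$-ring structure. Under this dictionary a monad isomorphism is the same datum as a $B$-ring isomorphism of the values on $B$, so the theorem reduces to computing $\mathbb{T}(B)\cong\mathbcal{O}_{P}$ as $B$-rings. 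The hypothesis that $P$ is finitely generated projective as a right $B$-module is exactly what makes $Q_{*}\cong(-)\otimes_{B}P^{*}$, so that both generators $Q^{*}=(-)\otimes_{B}P$ and $Q_{*}$ are cocontinuous; this cocontinuity is inherited by their coproduct, by the generated free monad, and by any quotient of it by a congruence, which legitimises the reduction.

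First I would compute the free monad generated by $Q^{*}\sqcup Q_{*}$. Since coproducts of endofunctors are computed pointwise and coproducts in $\modB$ are direct sums, one has $(Q^{*}\sqcup Q_{*})(N)=N\otimes_{B}P\oplus N\otimes_{B}P^{*}=N\otimes_{B}(P\oplus P^{*})$, so the generating endofunctor is $(-)\otimes_{B}(P\oplus P^{*})$. Its $n$-fold iterate is $(-)\otimes_{B}(P\oplus P^{*})^{\otimes_{B}n}$, whence the free monad has underlying functor $(-)\otimes_{B}T_{B}(P\oplus P^{*})$, the monad multiplication being induced by concatenation in the tensor algebra $T_{B}(P\oplus P^{*})$. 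In particular its value on $B$ is the tensor algebra itself, which matches the ambient ring in \eqref{Cu-Pims}.

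Next I would translate the generating congruence \eqref{CP-ident} into ring relations by reading the two natural identities through the explicit unit and counit of the adjunction. Because $P$ is finitely generated projective, the unit is coevaluation $\eta_{N}(n)=n\otimes_{B} p^{i}\otimes_{B}p^{*}_{i}$ against a dual basis and the counit is evaluation $\varepsilon_{N}=\mathrm{id}_{N}\otimes_{B}\mathrm{ev}$. Interpreting $\alpha_{N}$ and $\sigma_{N}$ as the right actions of $P$ and of $P^{*}$ on a module $N$, the identity $\alpha_{N}\circ Q^{*}(\sigma_{N})=\varepsilon_{N}$ becomes $(n\cdot p^{*})\cdot p=n\cdot p^{*}(p)$, i.e.\ the relation $p^{*}\otimes_{B}p=\varepsilon(p^{*}\otimes_{B}p)$, while $\sigma_{N}\circ Q_{*}(\alpha_{N})\circ\eta_{N}={\rm Id}_{N}$ becomes $(n\cdot p^{i})\cdot p^{*}_{i}=n$, i.e.\ the relation $p^{i}\otimes_{B}p^{*}_{i}=1$ and, after $B$-linearisation, $b=\eta(b)$. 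These are precisely the two families of relations appearing in \eqref{Cu-Pims}, so the two-sided ideal they cut out of $T_{B}(P\oplus P^{*})$ is the Cuntz--Pimsner ideal, giving $\mathbb{T}(B)=T_{B}(P\oplus P^{*})/\big(b-\eta(b),\,p^{*}\otimes_{B}p-\varepsilon(p^{*}\otimes_{B}p)\big)=\mathbcal{O}_{P}$ and hence $\mathbb{T}\cong(-)\otimes_{B}\mathbcal{O}_{P}$.

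The main obstacle is the bookkeeping hidden in the passage from natural identities to a two-sided ideal. The relations in \eqref{CP-ident} are universally quantified in the object $N$ and in the generic structure maps $\alpha_{N},\sigma_{N}$, whereas \eqref{Cu-Pims} fixes a single ideal; to bridge this I would argue, in the presentation formalism of \cite{S-72}, that a congruence on a cocontinuous free monad presented by natural relations corresponds under the $B$-ring dictionary to the two-sided ideal obtained by evaluating those relations at the generic (free) algebra on one generator, i.e.\ at $N=B$. Equivalently, and perhaps more transparently, I would identify the Eilenberg--Moore algebras of $\mathbb{T}$ with right $B$-modules carrying actions $\alpha_{N},\sigma_{N}$ subject to \eqref{CP-ident}, check by the computation above that these are exactly right $\mathbcal{O}_{P}$-modules, and then conclude the monad isomorphism from the resulting isomorphism of Eilenberg--Moore categories over $\modB$. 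The one genuinely delicate point is the first relation, where $\eta$ is expressed through a non-canonical dual basis $\{p^{i},p^{*}_{i}\}$ yet the resulting identity $b=\eta(b)$ is basis-independent; keeping the summation over $i$ implicit while verifying this independence is where care is required.
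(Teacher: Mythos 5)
Your proposal is correct, and its computational heart is exactly the paper's: reading the two identities \eqref{CP-ident} through the dual-basis expression of the unit and the evaluation expression of the counit, so that they become $(n\triangleleft p^{i})\triangleleft p^{*}_{i}=n$ and $(n\triangleleft p^{*})\triangleleft p=n\cdot p^{*}(p)$, i.e.\ precisely the relations of \eqref{Cu-Pims}. Where you differ is the packaging. The paper argues semantically and very briefly: it identifies objects equipped with $(\alpha_{N},\sigma_{N})$ satisfying \eqref{CP-ident} with right $\mathbcal{O}_{P}$-modules (and morphisms with module maps), i.e.\ it identifies Eilenberg--Moore categories over $\modB$, leaving implicit both the recovery of the monad from its algebra category and the fact that the quotient monad's underlying functor is $(-)\otimes_{B}\mathbcal{O}_{P}$. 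Your primary route is syntactic: you invoke the Eilenberg--Watts dictionary between cocontinuous monads on $\modB$ and $B$-rings, compute the free monad on $Q^{*}\sqcup Q_{*}$ explicitly as $(-)\otimes_{B}T_{B}(P\oplus P^{*})$, and then match the congruence generated by \eqref{CP-ident} with the two-sided ideal of \eqref{Cu-Pims}. What each buys: your route makes the underlying functor of the Cuntz--Pimsner monad explicit and makes precise in what sense a monad congruence on a tensoring monad is a two-sided ideal --- exactly the bookkeeping the paper glosses over --- at the price of the cocontinuity hypotheses (guaranteed here by $P$ being finitely generated projective, hence $Q_{*}\cong(-)\otimes_{B}P^{*}$); the paper's route is shorter but rests on the standard fact that a monad is determined, up to canonical isomorphism, by its Eilenberg--Moore category together with the forgetful functor, which is the ``equivalent, perhaps more transparent'' alternative you yourself describe, so in that variant your argument and the paper's coincide. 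Your concern about basis-independence is easily discharged: the relation is generated by $\eta(b)-b$ where $\eta$ is the (canonical) unit of the adjunction, and the dual basis is only a way of writing it; equivalently, since $B$ is unital, the ideal is already generated by $1-p^{i}\otimes_{B}p^{*}_{i}$ together with the evaluation relations.
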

\begin{proof}

After denoting the maps $\alpha_{N}: Q^{*}N\rightarrow N$ and  $\sigma_{N}: Q_{*}N \xdashrightarrow N$ as the right actions $n\otimes_{B}p\mapsto n\triangleleft p$ and $n\otimes_{B}p^{*}\mapsto n\triangleleft p^{*}$, respectively, and next after expressing the unit of the adjunction by the dual basis,  the counit of the adjunction by the evaluation and by applying the dual basis property,  identities \eqref{CP-ident} read  as 
\begin{align}
(n\triangleleft p^{i})\triangleleft p_{i}^{*} =n,\ \ \ \  (n\triangleleft p^{*})\triangleleft p = n\cdot p^{*}(p)
\end{align}
which means that the structure of an object of stable category on $N$ is tantamount to being a right module over $\mathbcal{O}_{P}$. The verification that morphisms in the stable category are equivalent to morphisms of right $\mathbcal{O}_{P}$-modules is routine.
\end{proof}

Finally, given an adjunction between two endofunctors, the following easy monadicity result  relates our Cuntz-Pimsner monad to the aforementioned stable category. This theorem can be regarded as a conceptual justification of somewhat ad hoc definition of the Cuntz-Pimsner monad.
\begin{theorem}\label{C-P}
The Eilenberg-Moore category of the Cuntz-Pimsner monad  is canonically equivalent to the  stable category of  the adjunction. Conversely, forgetting the retraction is monadic and the resulting monad is the Cuntz-Pimsner monad.
\end{theorem}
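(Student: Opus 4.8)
The plan is to read off the Eilenberg--Moore algebras of the Cuntz--Pimsner monad $T$ directly from its presentation, and then match them with the stable category by transposing one of the two structure maps across the adjunction $Q^{*}\dashv Q_{*}$. Since $T$ is the free monad on the coproduct $Q^{*}\sqcup Q_{*}$ cut down by the congruence generated by \eqref{CP-ident}, a $T$-algebra structure on an object $N$ of $\cat$ is precisely a pair of morphisms $\alpha_{N}\colon Q^{*}N\rightarrow N$ and $\sigma_{N}\colon Q_{*}N\rightarrow N$ subject to the two relations of \eqref{CP-ident}, while a $T$-algebra morphism is a map of $\cat$ commuting with both $\alpha$ and $\sigma$. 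This is the standard fact that presenting a monad by generating operations and relations restricts its algebras to those algebras of the generating endofunctors that satisfy the imposed equations; I would invoke the formalism of \cite{S-72} cited in the definition for its existence.

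First I would define the comparison functor $\cat^{T}\rightarrow \st_{\hspace{-0.15em}Q}$. It keeps the underlying object $N$ and the map $\sigma_{N}$ unchanged, and replaces $\alpha_{N}$ by its adjoint transpose $\omega_{N}:=Q_{*}(\alpha_{N})\circ \eta_{N}\colon N\rightarrow Q_{*}N$, whose inverse transpose is $\alpha_{N}=\varepsilon_{N}\circ Q^{*}(\omega_{N})$. The first relation of \eqref{CP-ident} is literally $\sigma_{N}\circ\omega_{N}=\mathrm{Id}_{N}$, i.e. the first equation of \eqref{stab-eqns}. For the second, substituting $\alpha_{N}=\varepsilon_{N}\circ Q^{*}(\omega_{N})$ into $\alpha_{N}\circ Q^{*}(\sigma_{N})=\varepsilon_{N}$ and using functoriality of $Q^{*}$ yields $\varepsilon_{N}\circ Q^{*}(\omega_{N}\circ\sigma_{N})=\varepsilon_{N}$, which is the second equation of \eqref{stab-eqns}. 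Thus the two systems of equations coincide under the transpose, so the functor is a bijection on objects.

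Next I would check that the morphism conditions match. The $\sigma$-compatibility is identical in \eqref{stab-mor-eqns} and in the definition of a $T$-algebra map. For the remaining condition, transposing the $Q^{*}$-equivariance $\varphi\circ\alpha_{N}=\alpha_{N'}\circ Q^{*}(\varphi)$ across the adjunction---applying $Q_{*}$, precomposing with $\eta_{N}$, and using naturality of $\eta$ together with the definition of $\omega$---turns it into $Q_{*}(\varphi)\circ\omega_{N}=\omega_{N'}\circ\varphi$, which is the first equation of \eqref{stab-mor-eqns}; since the transpose is a bijection on hom-sets this is an equivalence. Hence the comparison functor is fully faithful and bijective on objects, i.e. an isomorphism of categories (in particular an equivalence), and by construction it commutes with the two forgetful functors down to $\cat$.

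Finally, the converse monadicity statement comes for free from this isomorphism. The forgetful functor $U^{T}\colon \cat^{T}\rightarrow \cat$ from any Eilenberg--Moore category is monadic with associated monad the given one; transporting $U^{T}$ along the isomorphism $\cat^{T}\cong \st_{\hspace{-0.15em}Q}$ shows that the functor forgetting the retraction, $\st_{\hspace{-0.15em}Q}\rightarrow\cat$, $(N,\omega_{N},\sigma_{N})\mapsto N$, is monadic and that the monad it generates is $T$. I expect the one genuine obstacle to be the bookkeeping in the first paragraph: justifying precisely that the congruence-quotient of the free monad on $Q^{*}\sqcup Q_{*}$ has as its algebras exactly the pairs $(\alpha_{N},\sigma_{N})$ satisfying \eqref{CP-ident}, rather than some larger or smaller class. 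Once that identification is secured, everything else is the naturality of the adjunction transpose and the triangle identities, and is essentially forced.
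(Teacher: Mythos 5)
Your proposal is correct and follows essentially the same route as the paper's own proof: read off the Eilenberg--Moore algebras from the generators-and-relations presentation of the monad, pass between $\alpha_{N}$ and $\omega_{N}$ as mates under $Q^{*}\dashv Q_{*}$ to identify the object equations \eqref{CP-ident} with \eqref{stab-eqns}, match the morphism conditions via naturality of the unit/counit (the paper writes out both directions of the transposition that you compress into ``the transpose is a bijection on hom-sets''), and deduce the converse monadicity statement from the established isomorphism of categories over $\cat$.
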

\begin{proof} 
The presentation of our Cuntz-Pimsner monad in terms of generators and relations \eqref{CP-ident} implies immediately that its Eilenberg-Moore category is isomorphic to the category of objects $N$ equipped with the morphisms $\alpha_{N}: Q^{*}N\rightarrow N$, $\sigma_{N}: Q_{*}N\xdashrightarrow N$ satisfying \eqref{CP-ident} and morphisms $\varphi: N\rightarrow N'$ satisfying  
\begin{align}\label{EM-mor-eqns}
\varphi \circ \alpha_{N} =  \alpha_{N'}\circ Q^{*}(\varphi) , \hspace{1em}\varphi\circ \sigma_{N}  =\sigma_{N'}\circ Q_{*}(\varphi).
\end{align}

Now the argument is that if $\alpha_{N}: Q^{*}N\rightarrow N$ and $
\omega_{N}: N\rightarrow Q_{*}N$ are mates \cite{S-72} under the adjunction $Q^{*}\dashv Q_{*}$, they determine each other by the formulas
 \begin{align}\label{mates}
 \omega_{N} =  Q_{*}(\alpha_{N})\circ \eta_{N} , \hspace{1em}\alpha_{N} =\varepsilon_{N}\circ Q^{*}(\omega_{N}).
\end{align}
Therefore
 \begin{align}\label{st-equiv-CP}
\sigma_{N}\circ  \omega_{N} = \sigma_{N}\circ   Q_{*}(\alpha_{N})\circ \eta_{N} , \qquad\varepsilon_{N}\circ Q^{*}(\omega_{N}\circ \sigma_{N})=\alpha_{N}\circ Q^{*}(\sigma_{N})
\end{align}
what makes the conditions \eqref{stab-eqns} and \eqref{CP-ident} equivalent. and  hence that objects in the stable category and the Eilenberg-Moore category are equivalent. Moreover, by the first equation of \eqref{mates} and naturality of the unit $\eta$ we get
\begin{align}\label{CP-equiv-st}
Q_{*}(\varphi)\circ \omega_{N} 
& =  Q_{*}(\varphi)\circ  Q_{*}(\alpha_{N})\circ \eta_{N}\\
& =  Q_{*}(\varphi \circ  \alpha_{N})\circ \eta_{N}\\
& =  Q_{*}(\alpha_{N'}\circ Q^{*}(\varphi))\circ \eta_{N}\\
& =  Q_{*}(\alpha_{N'})\circ Q_{*}Q^{*}(\varphi)\circ \eta_{N}\\
& =  Q_{*}(\alpha_{N'})\circ \eta_{N'}\circ\varphi \\
&=\omega_{N'}\circ  \varphi, 
\end{align}
while by the second equation of \eqref{mates} and naturality of the counit $\varepsilon$ we get
 \begin{align}\label{st-equiv-CP}
\varphi \circ \alpha_{N} &= \varphi \circ  \varepsilon_{N}\circ Q^{*}(\omega_{N})\\
&=\varepsilon_{N'}\circ Q^{*}Q_{*}(\varphi)\circ Q^{*}(\omega_{N})\\
&=\varepsilon_{N'}\circ Q^{*}(Q_{*}(\varphi)\circ \omega_{N})\\
&=\varepsilon_{N'}\circ Q^{*}(\omega_{N}\circ \varphi )\\
&=\varepsilon_{N'}\circ Q^{*}(\omega_{N})\circ Q^{*}(\varphi )\\
&=\alpha_{N'}\circ Q^{*}(\varphi), 
\end{align}
what proves equivalence of conditions \eqref{stab-mor-eqns} and \eqref{EM-mor-eqns} and  hence that morphisms in the stable category and the Eilenberg-Moore category coincide.

The converse part of the theorem follows from the first part.
\end{proof}

By virtue of this theorem, the notion of the Cuntz-Pimsner monad for an adjunction is therefore a {\bf categorification} of the Cuntz-Pimsner construction in terms of the stable category. 
\end{example}
\subsection{A categorification of the quantum Leavitt path algebra}

\begin{definition}\label{dual-Leavitt} 
For a given representable quantum  quiver $(E, V, s, t)$ with  dicrete (e.g, quasi-finite) the source  morphism $s$,  we have a self-correspondence on $V$ defined by  
two adjunctions $s_{!}\dashv s^{*}$ and $t^{*}\dashv t_{*}$, where  $s_{!}: \cat_{E}\rightarrow \cat_{V}$ and $t_{*}: \cat_{E}\rightarrow \cat_{V}$, as follows. It is easy to see that then we have an adjunction  $Q:=(Q^{*}\dashv Q_{*})$ where $Q_{*}:= t_{*}s^{*}$ and $Q^{*}:=s_{!}t^{*}$. We call the Cuntz-Pimsner monad of that adjunction  the {\bf combinatorial Leavitt path monad} of $(E, V, s, t)$.
\end{definition}

The following example justifies this terminology, when one replaces the category of modules over the algebra with local units, which is generated by orthogonal idempotents  corresponding to vertices of a quiver,  by the opposite category of comodules over the linearized set of vertices.
\begin{example} Let $B$ be a $\Bbbk$-algebra generated by orthogonal idempotents $1_{v}$ corresponding to vertices $v\in V$ of a quiver $(V, E, s. t)$ with the set of edges $E$. Let us define a $B$-bimodule $P$ generated by elements $p^{e}$ corresponding to edges $e\in E$ and the bimodule structure given by
\begin{align}\label{quiv-bimod}
p^e \cdot 1_v =\delta^{s(e)}_{v}p^e,\qquad 1_v\cdot p^e   =\delta^{t(e)}_{v}p^e,
\end{align}
and call it the {\bf algebraic auto-correspondence defined by the quiver}.

\begin{theorem}
Whenever the source map of a classical quiver is quasi-finite, the  Cuntz-Pimsner monad of the algebraic auto-correspondence defined by the quiver is isomorphic to the  the Leavitt path algebra of that quiver.
\end{theorem}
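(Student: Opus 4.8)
The plan is to reduce the statement to the two facts already established for the auto-correspondence bimodule: the identification of the Cuntz--Pimsner monad with the Cuntz--Pimsner ring $\mathbcal{O}_{P}$, and the explicit generators-and-relations description \eqref{Cu-Pims}. So the proof would consist of (i) verifying that the quiver bimodule $P$ of \eqref{quiv-bimod} really satisfies the hypotheses of Example~\ref{CPring}, and (ii) reading a Leavitt-path presentation off $\mathbcal{O}_{P}$.

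First I would check that $P$ is finitely generated projective as a right $B$-module, which is exactly what is needed to instantiate Example~\ref{CPring}. Decomposing $P$ by the right action of the vertex idempotents gives $P\cdot 1_{v}=\Bbbk\text{-span}\{\,p^{e}\mid s(e)=v\,\}$, and quasi-finiteness of $s$ makes each of these summands finite-dimensional; hence $\{p^{e}\}_{e\in E}$ together with the coordinate functionals $\{(p^{e})^{*}\}_{e\in E}$ form a (locally finite) dual basis, with $(p^{e})^{*}(p^{f})=\delta_{e,f}\,1_{s(e)}$. This simultaneously realizes the evaluation $\varepsilon\colon P^{*}\otimes_{B}P\to B$ and the coevaluation/unit $\eta$ in explicit combinatorial terms, so that the adjunction $Q^{*}\dashv Q_{*}$ of Example~\ref{CPring} is precisely the one underlying the combinatorial Leavitt path monad of $(E,V,s,t)$. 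By the identification of the Cuntz--Pimsner monad of a right-finitely-generated-projective bimodule with the Cuntz--Pimsner ring $\mathbcal{O}_{P}$ proved in Example~\ref{CPring}, it then suffices to compute $\mathbcal{O}_{P}=T_{B}(P\oplus P^{*})/(b-\eta(b),\,p^{*}\otimes_{B}p-\varepsilon(p^{*}\otimes_{B}p))$ on generators.

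Writing $e:=p^{e}$ and $e^{*}:=(p^{e})^{*}$, the $B$-bimodule structures of $P$ and $P^{*}$ give the edge and ghost-edge support relations $1_{t(e)}\,e=e=e\,1_{s(e)}$ and $1_{s(e)}\,e^{*}=e^{*}=e^{*}\,1_{t(e)}$; the defining relation $p^{*}\otimes_{B}p=\varepsilon(p^{*}\otimes_{B}p)$ becomes the first Cuntz--Krieger relation $e^{*}f=\delta_{e,f}\,1_{s(e)}$; and the defining relation $b=\eta(b)$, evaluated on a vertex idempotent, becomes $1_{v}=\sum_{e}e\,e^{*}$ summed over the edges incident to $v$ in the relevant direction, i.e.\ the second Cuntz--Krieger relation. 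Matching these four families against the standard presentation of the Leavitt path algebra, under the orientation dictionary that identifies the Leavitt source with our $t$ and the Leavitt range with our $s$, produces mutually inverse $\Bbbk$-algebra homomorphisms, and the theorem follows.

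The main obstacle is the bookkeeping around the second Cuntz--Krieger relation: one must check that $\eta(1_{v})$ is a genuine finite element of $P\otimes_{B}P^{*}$ at every vertex, that it unwinds to the correct finite sum of $e\,e^{*}$, and that no spurious relation is forced at sinks (so that each $1_{v}$ remains nonzero). This is where the quasi-finiteness hypothesis does its real work, guaranteeing that the coevaluation localized at each $1_{v}$ is a finite sum, and keeping careful track of the left/right supports together with the opposite-category conventions inherited from the quantum setting is the one delicate point; once the two presentations are matched, the verification that the resulting homomorphisms are mutually inverse is formal.
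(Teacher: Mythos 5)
Your proposal is correct and takes essentially the same route as the paper's proof: exhibit the dual basis $(p^{e},p^{*}_{e})$ with $p^{*}_{e}(p^{f})=\delta^{f}_{e}1_{s(e)}$ to get projectivity of $P$, invoke the theorem of Example \ref{CPring} identifying the Cuntz--Pimsner monad of such a bimodule with the ring $\mathbcal{O}_{P}$ of \eqref{Cu-Pims}, and then match that presentation against the Cuntz--Krieger relations of the Leavitt path algebra --- indeed the paper declares this last matching ``left to the reader,'' so your explicit unwinding of the support relations, (CK1) and (CK2) under the orientation dictionary is precisely the omitted verification. One caution about your final paragraph: your worry about sinks is legitimate, but quasi-finiteness does not resolve it, since it gives \emph{finite} fibers rather than \emph{nonempty} ones, and at a vertex whose relevant fiber is empty the relation $b-\eta(b)$ does force $1_{v}=0$ in $\mathbcal{O}_{P}$, so the stated isomorphism with the Leavitt path algebra of \cite{A-A-SM-17} really requires the quiver to have no such sinks (or a relative Cuntz--Pimsner construction); this, however, is a defect shared by the paper's own statement and proof, not something introduced by your argument.
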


\begin{proof} Let us introduce elements $p^{*}_{e}$ of the right dual $P^{*}$ corresponding to edges of the quiver  by declaring that for any two edges $e, f\in E$
\begin{align}\label{quiv-counit}
p^{*}_{e}(p^f) =\delta^{f}_{e} 1_{s(e)}.
\end{align}
Since elements $p^{e}$ generate $P$ as a right $B$-module,  the following calculation 
\begin{align}\label{dual-basis}
\sum_{e}p^e\cdot p^{*}_{e}(p^f) =\sum_{e}p^e\cdot \delta^{f}_{e}1_{s(e)} = p^f \cdot 1_{s(f)}=p^f.
\end{align}
shows that the system $(p^e, p^{*}_{e})_{e\in E}$ is a dual basis for $P$ regarded as a right $B$-module, proving its projectivity. 

The contragredient $B$-bimodule structure on $P^{*}$ reads as 
\begin{align}\label{contr-bim}
1_v\cdot p^{*}_{e}   =\delta^{s(e)}_{v}p^{*}_{e}, \qquad  p^{*}_{e} \cdot 1_v =\delta^{t(e)}_{v}p^{*}_{e}.
\end{align}
Since for $s$ quasi-finite $P$ is finitely generated projective as a right $B$-module over a ring $B$ with local units, 
\begin{align}\label{P-adj}
Q^{*}:=(-)\otimes_{B}P\quad {\rm and}\quad  Q_{*}:=\modB (P, -) \cong (-)\otimes_{B}P^{*}
\end{align}
is a pair of adjoint functors $Q^{*}\dashv  Q_{*}$. By Theorem \ref{C-P} the corresponding Cuntz-Pimsner monad is isomorphic to $B$-balanced tensoring from the right by the $B$-ring $\mathbcal{O}_{P}$. The verification, based on \eqref{quiv-bimod}, \eqref{quiv-counit} and \eqref{contr-bim}, that $\mathbcal{O}_{P}$ is then the Leavitt path algebra \cite{A-A-SM-17} is left to the reader.

\end{proof}
\end{example}
\begin{remark}
By the definition of $Q_{*}:= t_{*}s^{*}$ and adjunction $t^{*}\dashv t_{*}$, the morphism $\omega_{N}: N\rightarrow Q_{*}N$ is equivalent to a morphism $t^{*}N\rightarrow s^{*}N$ and hence can be regarded as a representation of the quiver \cite{G-72} which is completed to a retraction $(\omega_{N}, \sigma_{N})$ leaving the counit of the adjunction $Q^{*}\dashv Q_{*}$ stable, and hence can be called {\bf stable representation} of the quiver. Moreover, in a given stable representation of a quiver {\bf forgetting about arrows} and restricting the representation to vertices only forgets also about stability.  In this terminology, Theorem \ref{C-P} specializes to the following result.
\begin{theorem}\label{LP}
The Eilenberg-Moore category of the Leavitt path monad of a (quantum) quiver  is canonically equivalent to the category of stable representations of that quiver. Conversely, forgetting about arrows in a stable representation of the quiver is monadic and the resulting monad is the Leavitt path monad.
\end{theorem}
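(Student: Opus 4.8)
The plan is to obtain Theorem \ref{LP} as the specialization of the monadicity result Theorem \ref{C-P} to the adjunction $Q$ underlying the combinatorial Leavitt path monad, and then to translate the resulting abstract stable category into the concrete language of stable representations of the quiver. First I would record that $Q^{*}=s_{!}t^{*}$ and $Q_{*}=t_{*}s^{*}$ genuinely form an adjunction $Q^{*}\dashv Q_{*}$: since $s_{!}\dashv s^{*}$ and $t^{*}\dashv t_{*}$, the composite of the two left adjoints $s_{!}$ and $t^{*}$ is left adjoint to the composite of the two right adjoints $t_{*}$ and $s^{*}$ taken in the reverse order, which is exactly the pairing $s_{!}t^{*}\dashv t_{*}s^{*}$. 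This is the adjunction of Definition \ref{dual-Leavitt}, whose Cuntz-Pimsner monad is by that definition the Leavitt path monad. Theorem \ref{C-P} then applies verbatim and yields a canonical equivalence between the Eilenberg-Moore category of the Leavitt path monad and the stable category $\st_{\hspace{-0.15em}Q}$, together with the monadicity of the functor forgetting the retraction.

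It remains to identify $\st_{\hspace{-0.15em}Q}$ with the category of stable representations. An object of $\st_{\hspace{-0.15em}Q}$ is an object $N$ of $\cat_{V}$ equipped with $\omega_{N}\colon N\rightarrow Q_{*}N=t_{*}s^{*}N$ and an opposite-directed $\sigma_{N}\colon t_{*}s^{*}N\rightarrow N$ satisfying the stability equations \eqref{stab-eqns}. Here I would invoke the adjunction $t^{*}\dashv t_{*}$: the hom-set bijection $\cat_{V}(N,t_{*}s^{*}N)\cong\cat_{E}(t^{*}N,s^{*}N)$ turns $\omega_{N}$ into a morphism $t^{*}N\rightarrow s^{*}N$, which by \cite{G-72} is precisely a representation of the quiver, as already noted in the Remark preceding the theorem. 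The retraction datum $\sigma_{N}$ together with the two equations of \eqref{stab-eqns} is then exactly the completion of this representation to a stable one, so $N\mapsto(t^{*}N\rightarrow s^{*}N,\sigma_{N})$ is a bijection on objects; the morphism conditions \eqref{stab-mor-eqns} transported through the same mate correspondence give precisely the morphisms of stable representations. This establishes the first assertion.

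For the converse, the forgetful functor of Theorem \ref{C-P} sends a stable object $(N,\omega_{N},\sigma_{N})$ to its underlying $N\in\cat_{V}$, i.e. it discards the retraction $(\omega_{N},\sigma_{N})$. Under the translation above, discarding $(\omega_{N},\sigma_{N})$ is the same as discarding the quiver representation $t^{*}N\rightarrow s^{*}N$ and the stabilizing data, i.e. it is the operation of \emph{forgetting about arrows} and restricting to vertices described in the Remark. Hence the monadicity clause of Theorem \ref{C-P} states exactly that forgetting about arrows is monadic with induced monad the Leavitt path monad, which is the second assertion.

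The one point requiring genuine care---and the main obstacle---is checking that the mate correspondence under $t^{*}\dashv t_{*}$ is compatible with the second stability equation, which is phrased using the counit $\varepsilon$ of $Q^{*}\dashv Q_{*}$ rather than of $t^{*}\dashv t_{*}$. Concretely, one must verify that when $\omega_{N}$ is rewritten as a representation $t^{*}N\rightarrow s^{*}N$, the condition $\varepsilon_{N}\circ Q^{*}(\omega_{N}\circ\sigma_{N})=\varepsilon_{N}$ becomes an intrinsic condition on the pair consisting of representation and retraction that does not secretly depend on the chosen presentation; since the counit of $Q^{*}\dashv Q_{*}$ factors through the units and counits of the two constituent adjunctions $s_{!}\dashv s^{*}$ and $t^{*}\dashv t_{*}$, this reduces to a diagram chase with the triangle identities, and everything else is the bookkeeping already carried out abstractly in the proof of Theorem \ref{C-P}.
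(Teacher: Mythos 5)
Your proposal is correct and follows essentially the same route as the paper: Theorem \ref{LP} is presented there as a direct specialization of Theorem \ref{C-P}, with the preceding Remark supplying exactly your dictionary (via the mate correspondence under $t^{*}\dashv t_{*}$, the morphism $\omega_{N}$ becomes a quiver representation $t^{*}N\rightarrow s^{*}N$, the retraction datum is the stability, and forgetting the retraction is forgetting the arrows). The compatibility check you flag at the end as the main obstacle is in fact vacuous, because the paper \emph{defines} a stable representation as a representation completed to a retraction $(\omega_{N},\sigma_{N})$ leaving the counit of $Q^{*}\dashv Q_{*}$ stable, so the identification of $\st_{\hspace{-0.15em}Q}$ with stable representations is a matter of terminology rather than something to verify.
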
 
\end{remark}

\begin{remark}
 Let us stress that when defining our $Q$-stable category of a quantum quiver, in opposite to the construction of a graph C*-algebra of a quiver,  the Leavitt path algebra of a quiver as in \cite{C-O-11} or of a quantum quiver in the sense of \cite{G-G-P-25},  instead of the data like an algebra-valued scalar product of a Hilbert-module, pairing of bimodules or adjacency matrices or their quantum generalizations, we use the machinery of adjunctions which is mathematically more fundamental.  Note that in  the coalgebraic version  we can weaken  the row-finite condition (of finite emission) appearing in the algebraic Cuntz-Krieger family in the construction of the absolute Cuntz-Pimsner ring. 
\end{remark}

\section*{Acknowledgement}
This research is part of the EU Staff Exchange project 101086394 Operator Algebras That One
Can See. The project is co-financed by the Polish Ministry of Education and Science under
the program PMW (grant agreement 5448/HE/2023/2).

\noindent The author sincerely acknowledges Jakub Zarzycki’s contribution to the study of quantum Boolean algebras.


\begin{thebibliography}{2}
\bibitem{A-A-SM-17} G. Abrams, P. Ara, M. Siles Molina. \emph{Leavitt path algebras}, Lecture Notes in Mathematics, vol. 2191, Springer Verlag, 2017.


\bibitem{A-11Cat} A. L. Agore. Categorical constructions for Hopf algebras, \emph{Communications in Algebra}
39 (2011), no. 4, 1476–1481.

\bibitem{A-11Lim} A. L. Agore. Limits of coalgebras, bialgebras and Hopf algebras, \emph{Proceedings of the American Mathematical Society} 139 (2011), no. 03, 855–855.

\bibitem{B-P-07} T. Bates, D. Pask. C$^{*}$-algebras of labelled graphs, \emph{J. Operator Theory} 57 (1) (2007) 207–226.

\bibitem{Bl-Pi-89} W.J. Blok; D. Pigozzi. \emph{Algebraizable logics.} American Mathematical Society, (1989).

\bibitem{C-M-Z-07} S. Caenepeel, G. Militaru, S. Zhu. Doi-Hopf modules, Yetter-Drinfel’d modules and Frobenius type properties, \emph{Trans. Amer. Math. Soc.} 349 (1997), 4311-4342.

\bibitem{C-08} T.M. Carlsen. Cuntz–Pimsner C$^{*}$-algebras associated with subshifts, \emph{Internat. J. Math.} 19 (2008) 47–70.

\bibitem{C-O-11} T.M. Carlsen, E. Ortega. Algebraic Cuntz–Pimsner rings. \emph{Proceedings of the London Mathematical Society}, 103 (2011), 601-653.

\bibitem{C-O-P-17} T.M. Carlsen, E. Ortega, E. Pardo. C$^{*}$-algebras associated to Boolean dynamical systems, \emph{J. Math. Anal. Appl.} 450 (2017) 727–768.
 
\bibitem{C-11} D. Chikhladze. A category of quantum categories, \emph{Theory and Applications of Categories}, Vol. 25, No. 1 (2011), 1--37. 

\bibitem{C-K-80} J. Cuntz, W. Krieger. A class of C$^{*}$-algebras and topological Markov chains, \emph{Invent. Math.} 56 (1980) 251–268.

\bibitem{D-S-03} B. Day, R. Street. \emph{Quantum categories, star autonomy, and quantum groupoids}, (2003)

\bibitem{E-08} R. Exel. Inverse semigroups and combinatorial C$^{*}$-algebras, \emph{Bull. Braz. Math. Soc.} 39 (2) (2008) 191–313.

\bibitem{E-L-99} R. Exel, M. Laca. Cuntz–Krieger algebras for infinite matrices, \emph{J. Reine Angew. Math.} 512 (1999) 119–172.

\bibitem{F-L-R-00} N. Fowler, M. Laca, I. Raeburn. The C$^{*}$-algebras of infinite graphs, \emph{Proc. Amer. Math. Soc.} 128 (2000) 2319–2327.

\bibitem{G-72} P. Gabriel. Unzerlegbare Darstellungen I, {\emph Manuscripta mathematica} 6 (1972), 71-104.

\bibitem{G-G-P-25} J. Graham, R. Goswami, J. Palin. Leavitt path algebras of quantum quivers, \emph{Arch. Math.} 124  (2025), 29–48. 

\bibitem{K-03} T. Katsura. \emph{A construction of C$^{*}$-algebras from C$^{*}$-correspondences}, Advances in Quantum
Dynamics, 173-182, Contemp. Math., 335, Amer. Math. Soc., Providence, RI, 2003.

\bibitem{K-04} T. Katsura. On C$^{*}$-algebras associated with C$^{*}$-correspondences, {}J. Funct. Anal. 217 (2004),
366-401.

\bibitem{K-04Trans} T. Katsura. A class of C$^{*}$-algebras generalizing both graph C$^{*}$ -algebras and homeomorphism C$^{*}$-algebras, I – fundamental results, \emph{Trans. Amer. Math. Soc.} 356 (11) (2004) 4287–4322.

\bibitem{K-65} H. Kleisli. Every standard construction is induced by a pair of adjoint functors, \emph{Proc. Amer. Math. Soc.} 16 (1965), 544-546.

\bibitem{K-P-R-R-97} A. Kumjian, D. Pask, I. Raeburn, J. Renault. Graphs, groupoids, and Cuntz–Krieger algebras, \emph{J. Funct. Anal.} 144 (2)
(1997) 505–541.

\bibitem{K-P-99}  A. Kumjian and D. Pask. C$^{*}$-algebras of directed graphs and group actions, \emph{Ergod. Th. \&
Dynam. Sys.} 19 (1999), 1503–1519.

\bibitem{ML-71} S. MacLane. §VI.5 of: \emph{Categories for the Working Mathematician}, Graduate Texts in Mathematics 5, Springer 1971.

\bibitem{ML-M-97} S. Mac Lane , I. Moerdijk.  \emph{Sheaves in Geometry and Logic. A First Introduction to Topos Theory}. Springer (1994).

\bibitem{M-97} K. Matsumoto. On C$^{*}$-algebras associated with subshifts, \emph{Internat. J. Math.} 8 (1997) 357–374.

\bibitem{M-S-00} P. Muhly,  B. Solel. On the Morita equivalence of tensor algebras. \emph{Proceedings of
London Mathematical Society}, 81(1) (2000), 113–168.

\bibitem{Mey-Seh-19} R. Meyer,  C. F. Sehnem. A bicategorical interpretation for relative Cuntz-Pimsner
algebras. \emph{Math Scand.} 125 (2019), 84–112.

\bibitem{M-T-05} P. Muhly, M. Tomforde. Topological quivers, I\emph{nternat. J. Math.} 16 (2005), 693–755.

\bibitem{M-R-V-18} B. Musto, D. Reutter, D. Verdon. A compositional approach to quantum functions, \emph{J. Math. Phys.} 59, 081706 (2018). 

\bibitem{P-97} M. Pimsner. \emph{A class of C$^{*}$-algebras generalizing both Cuntz-Krieger algebras and crossed
products by Z}, Free probability theory, 189-212, Fields Inst. Commun., 12, Amer. Math. Soc.,
Providence, RI 1997.

\bibitem{R-05} I. Raeburn. \emph{Graph Algebras}, CBMS Regional Conference Series in Mathematics 103 Pub-
lished for the Conference Board of the Mathematical Sciences, Washington D.C. by the AMS,
Providence, RI (2005).

\bibitem{S-97} S. Sakai. \emph{C$^{*}$-Algebras and $W^{*}$-Algebras}, Springer Berlin, Heidelberg 1997.

\bibitem{S-73} G. Segal. Configuration-Spaces and Iterated Loop-Spaces. \emph{Inventiones mathematicae} 21 (1973), 213-222. 

\bibitem{S-69} M. E. Sweedler. Hopf Algebras, Benjamin New York, 1969.

\bibitem{S-72} R. Street. The Formal Theory of Monads. \emph{Journal of Pure and Applied Algebra} 2(2) (1972), 149-168.

\bibitem{T-77} M. Takeuchi. Morita theorems for categories of comodules, \emph{J. Fac. Sci. Univ. Tokyo} 24 (1977), 629–644.

\bibitem{T-03} M. Tomforde. A unified approach to Exel–Laca algebras and C$^{*}$-algebras associated to graphs, \emph{J. Operator Theory} 50 (2003), 345–368.


\end{thebibliography}
\end{document}